\newfont{\Bbb}{msbm10 scaled\magstephalf}
 \newtheorem{thm}{Theorem}[section]
 \newtheorem{cor}[thm]{Corollary}
 \newtheorem{lem}[thm]{Lemma}
 \theoremstyle{definition}
 \newtheorem{defn}[thm]{Definition}
\theoremstyle{remark}
 \newtheorem{rem}[thm]{Remark}
 \newtheorem{exm}[thm]{Example}
 \numberwithin{equation}{section}
\newcommand{\pf}{\begin{proof}}
\newcommand{\zb}{\end{proof}}
\newcommand{\la}{\langle}
\newcommand{\ra}{\rangle}
\newcommand{\ol}{\overline}
\newcommand{\ma}{\mathcal}
\newcommand{\Ker}{\mathop{\rm Ker}\nolimits}
\def\DD{{\mathbb D}}
\begin{document}

\title[nearly invariant subspace]{nearly invariant subspaces for operators in Hilbert spaces}
\author[Y. Liang]{Yuxia Liang}
\address{Yuxia Liang \newline School of Mathematical Sciences,
Tianjin Normal University, Tianjin 300387, P.R. China.} \email{liangyx1986@126.com}
\author[J. R. Partington]{Jonathan R. Partington}
\address{Jonathan R. Partington \newline School of Mathematics,
  University of Leeds, Leeds LS2 9JT, United Kingdom.}
 \email{J.R.Partington@leeds.ac.uk}
\subjclass[2010]{Primary: 47B38; Secondary 47A15.}
\keywords{Nearly invariant subspace, shift operator, Blaschke product, Dirichlet-type space}

\begin{abstract} For a shift operator $T$ with finite multiplicity acting on a separable infinite dimensional Hilbert space we represent its nearly $T^{-1}$ invariant subspaces in Hilbert space in terms of invariant subspaces under the backward shift. Going further, given any finite Blaschke product $B$, we give a description of the nearly $T_{B}^{-1}$ invariant subspaces for the operator $T_B$ of multiplication by $B$
 in a scale of Dirichlet-type spaces.
\end{abstract}

\maketitle

\section{Introduction}
Given $\alpha$ a real number, the Dirichlet-type space $\ma{D}_\alpha(\mathbb{D})$ consists of all analytic functions $f(z)=\sum_{k=0}^\infty a_k z^k$ in $\mathbb{D}$ such that its norm
$$\|f\|_\alpha:=\left(\sum_{k=0}^\infty|a_k|^2(k+1)^\alpha
\right)^{1/2}<+\infty .$$ If $\alpha=-1$, $\ma{D}_{-1}=A^2(\mathbb{D})$ the classical Bergman space, for $\alpha=0,$ $\ma{D}_0=H^2(\mathbb{D})$ the Hardy space, and for $\alpha=1,$ $\ma{D}_{1}=\ma{D}(\mathbb{D})$ the classical Dirichlet space, systematically investigated in the book \cite{EKMR}. These are particular instances of separable infinite dimensional Hilbert spaces, to be denoted by $\ma{H}$ in this paper. We let $\ma{B}(\ma{H})$ denote the collection of all bounded linear operators acting on $\ma{H}.$

The notations $\mathbb{N}_0$ and $\mathbb{N}$ denote the set of all nonnegative integers and positive integers, respectively. Here we recall the $\mathbb{C}^l$-vector-valued Hardy space  $H^2(\mathbb{D},\mathbb{C}^l)$ consists of all analytic $F: \mathbb{D}\rightarrow \mathbb{C}^l$ such that the norm
$$\|F\|=\left(\sup\limits_{0<r<1}\frac{1}{2\pi} \int_0^{2\pi} \|F(re^{iw})\|^2 dw\right)^{\frac{1}{2}}<\infty.$$
Writing $F=[f_1,f_2,\cdots, f_l]$ with $f_i: \mathbb{D}\rightarrow \mathbb{C}$, it is clear that
$F\in H^2(\mathbb{D},\mathbb{C}^l)$ if and only if $f_i\in H^2(\mathbb{D})$ for $i=1,2,\cdots,l$ with $l\in \mathbb{N}.$

Inner functions play important role in describing the invariant subspaces of the unilateral shift $Sf(z)=zf(z)$ (multiplication by the independent variable) on $H^2(\mathbb{D})$. Beurling's Theorem states that a nontrivial closed subspace $\ma{M}\subset H^2(\mathbb{D})$ satisfies $S\ma{M}\subset \ma{M}$ if and only if $\ma{M}=\theta H^2(\mathbb{D})$ with $\theta$ is inner. The simplest nontrivial inner function is an automorphism of $\mathbb{D}$ mapping $\mathbb{T}$ onto $\mathbb{T}.$ More generally, if $\{a_n\}_{n\geq 1}$ is a sequence of points in $\mathbb{D}\setminus\{0\}$ satisfying the Blaschke condition $\sum_{n=1}^\infty (1-|a_n|)<\infty,$ then we can construct the corresponding Blaschke product $$B(z):=z^m \prod_{n=1}^\infty \frac{|a_n|}{a_n}\frac{a_n-z}{1-\ol{a_n}z},\;m\in \mathbb{N}_0.$$

The Toeplitz operator $T_g$ on $H^2(\mathbb{D})$ is defined by
\begin{eqnarray*}T_g f=P(gf)\;\mbox{with}\; g\in L^\infty(\mathbb{T})\label{Toeplitz}\end{eqnarray*} and $P$ is the orthogonal projection from $ L^2(\mathbb{T})$ on $H^2(\mathbb{D})$. It is well-known that the kernel of $T_{\ol{\theta}}$ on $H^2(\mathbb{D})$ is a model space $K_{\theta}:=H^2\ominus \theta H^2$ with $\theta$ an inner function (cf. \cite{GMR,Ha,Sa2}).

Given a Blaschke product $B$, the Wold Decomposition Theorem implies that every $f \in H^2(\DD)$ has an expression
\begin{eqnarray*}
f(z)=\sum_{k=0}^\infty B^k(z)h_k(z)\label{Bmm}
\end{eqnarray*}
with $h_k$ are functions in $K_{B}=H^2\ominus B H^2$. An analogous theorem for Dirichlet-type space $\ma{D}_\alpha(\mathbb{D})$ has been proved
as follows.\vspace{1mm}

\begin{thm}\cite[Theorem 3.1]{GPS}, \cite[Theorem 2.1]{CGP}  Let $\alpha\in [-1,1]$ and $B$ a finite Blaschke product. Then $f\in \ma{D}_\alpha(\mathbb{D})$ if and only if $f=\sum_{k=0}^\infty B^k h_k$ (convergence in $\ma{D}_\alpha$ norm) with $h_k\in K_B$ and \begin{eqnarray}\sum_{k=0}^\infty(k+1)^\alpha \|h_k\|_{H^2}^2<\infty.\label{normB}\end{eqnarray}\end{thm}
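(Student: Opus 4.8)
The plan is to treat the displayed decomposition as a single isomorphism statement and to isolate the genuinely analytic content at the two endpoints $\alpha=\pm1$, obtaining the intermediate cases by interpolation. Write $N=\deg B$, so that $\dim K_B=N$, and fix an orthonormal basis $e_1,\dots,e_N$ of $K_B$ in the $H^2$-inner product. Since $B$ is inner, multiplication by $B$ is a pure isometry on $H^2(\mathbb{D})$ with wandering subspace $K_B$, so the Wold decomposition gives the orthogonal splitting $H^2(\mathbb{D})=\bigoplus_{k\ge0}B^kK_B$. Consequently every $f\in H^2(\mathbb{D})$ has a unique expansion $f=\sum_{k\ge0}B^kh_k$ with $h_k\in K_B$ and $\|f\|_{H^2}^2=\sum_{k\ge0}\|h_k\|_{H^2}^2$; this is precisely the case $\alpha=0$, and it pins down the coefficients $h_k$ once and for all. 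Collecting the coordinates $\vec h_k\in\mathbb{C}^N$ of $h_k$ relative to $\{e_j\}$, the unitary $\Phi\colon B^ke_j\mapsto z^k\epsilon_j$ identifies $H^2(\mathbb{D})$ with $H^2(\mathbb{D},\mathbb{C}^N)$ and carries $f$ to $F=\sum_k z^k\vec h_k$, with $\|h_k\|_{H^2}=|\vec h_k|_{\mathbb{C}^N}$.

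Under this identification the asserted bound $\sum_k(k+1)^\alpha\|h_k\|_{H^2}^2<\infty$ says exactly that $F$ lies in the vector-valued space $\ma{D}_\alpha(\mathbb{D},\mathbb{C}^N)$, whose squared norm is $\sum_k(k+1)^\alpha|\vec h_k|^2$. Thus the theorem is equivalent to the statement that $\Phi$ restricts to a Banach-space isomorphism $\ma{D}_\alpha(\mathbb{D})\to\ma{D}_\alpha(\mathbb{D},\mathbb{C}^N)$, i.e. that $\|f\|_\alpha\asymp\big(\sum_k(k+1)^\alpha\|h_k\|_{H^2}^2\big)^{1/2}$ on the dense set of polynomials. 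Both families $\{\ma{D}_\alpha\}$ and $\{\ma{D}_\alpha(\mathbb{C}^N)\}$ are weighted $\ell^2$-scales, hence complex interpolation scales, with $[\ma{D}_{-1},\ma{D}_1]_\theta=\ma{D}_{2\theta-1}$. Since $\Phi$ is a fixed linear map that is unitary at $\alpha=0$, it suffices to prove that $\Phi$ and $\Phi^{-1}$ are bounded at the two endpoints $\alpha=1$ and $\alpha=-1$; interpolation then yields boundedness of both for all $\alpha\in[-1,1]$, and hence the desired isomorphism.

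For the endpoints I would use the integral norms $\|g\|_1^2\asymp|g(0)|^2+\int_{\mathbb{D}}|g'|^2\,dA$ and $\|g\|_{-1}^2\asymp\int_{\mathbb{D}}|g|^2\,dA$, together with the factored form $f(z)=\sum_{j=1}^N e_j(z)\,(G_j\circ B)(z)$, where $G_j(w)=\sum_k(\vec h_k)_j\,w^k$ so that $\sum_j\|G_j\|_\alpha^2=\sum_k(k+1)^\alpha\|h_k\|_{H^2}^2$. The upper bounds (boundedness of $\Phi^{-1}$) are the routine direction: one differentiates through the sum, uses the uniform boundedness of the rational functions $e_j,e_j'$ on $\overline{\mathbb{D}}$, the Cauchy--Schwarz inequality across the finite index $j$, the isometry $\|G\circ B\|_{H^2}=\|G\|_{H^2}$, and the $N$-to-one change of variables $\int_{\mathbb{D}}(\psi\circ B)\,|B'|^2\,dA=N\int_{\mathbb{D}}\psi\,dA$, which scales the Dirichlet integral of $G\circ B$ by the degree. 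This controls $\|f\|_\alpha$ by $\sum_j\|G_j\|_\alpha$ at both endpoints.

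The main obstacle is the reverse (lower) bound, i.e. boundedness of $\Phi$ itself, where one must recover $\sum_j\|G_j\|_\alpha^2$ from $\|f\|_\alpha$. Here the exact $H^2$-orthogonality of the blocks $B^kK_B$ is no longer available, and the ``cross terms'' produced by the factors $e_j(z)$ (for instance the $\sum_j e_j'(z)(G_j\circ B)$ part of $f'$) must be absorbed. After the change of variables $w=B(z)$ everything is governed, fibrewise over $w\in\mathbb{D}$, by the $N\times N$ evaluation matrix $[\,e_j(z_i)\,]$ at the preimages $B^{-1}(w)=\{z_1,\dots,z_N\}$: the lower bound amounts to this matrix being uniformly invertible, equivalently to $\{B^ke_j\}$ being a Riesz system in the weighted integral norm rather than merely orthonormal in $H^2$. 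Establishing this uniform fibrewise invertibility---using that a function in $K_B$ is determined by its values on a generic fibre, that $|B'|$ is bounded above and bounded away from $0$ on $\mathbb{T}$, and the distortion of $B$ near its critical values---is the technical heart of the argument. Once the two endpoint estimates are secured in both directions, complex interpolation delivers the norm equivalence, and hence the theorem, for every $\alpha\in[-1,1]$.
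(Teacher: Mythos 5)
The paper itself offers \emph{no} proof of this statement: it is quoted from \cite{GPS} and \cite{CGP}, where it is proved by directly expanding $\bigl\|\sum_k B^k h_k\bigr\|_{\ma{D}_\alpha}^2$ and controlling the cross terms $\la B^k h_k, B^m h_m\ra_{\ma{D}_\alpha}$ (via change of variables under $B$ and formulas for the Dirichlet integral), not by interpolation. So your scheme is a genuinely different route, and its skeleton is reasonable: $\{\ma{D}_\alpha\}$ and $\{\ma{D}_\alpha(\mathbb{C}^N)\}$ are weighted $\ell^2$ scales, so $[\ma{D}_{-1},\ma{D}_1]_\theta=\ma{D}_{2\theta-1}$ does hold; and the upper bounds (boundedness of $\Phi^{-1}$) at $\alpha=\pm1$ do follow from the multiplier property of a basis $e_j$ of $K_B$, Cauchy--Schwarz over the finite index $j$, and $\int_{\DD}(\psi\circ B)|B'|^2\,dA=N\int_{\DD}\psi\,dA$. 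Two smaller repairs are needed even there: $\|G\circ B\|_{H^2}=\|G\|_{H^2}$ holds only if $B(0)=0$ (in general the push-forward of arc length under $B$ is harmonic measure at $B(0)$, so one gets a two-sided equivalence, which suffices); and for $\alpha\in[-1,0)$ a function of $\ma{D}_\alpha$ need not lie in $H^2$, so the coefficients $h_k$ are \emph{not} ``pinned down once and for all'' by the Wold decomposition --- you must define $\Phi$ on a common dense subspace (e.g.\ finite sums $\sum_{k\le K}B^kh_k$, whose $\Phi$-images are the vector polynomials) and extend at each endpoint, which also supplies the surjectivity needed before $\Phi^{-1}$ can be interpolated.

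The genuine gap sits exactly where you locate ``the technical heart,'' and the mechanism you propose for closing it is false as stated. You reduce the endpoint lower bounds (boundedness of $\Phi$) to \emph{uniform} invertibility of the fibre matrices $M(w)=[e_j(z_i)]$, where $\{z_1,\dots,z_N\}=B^{-1}(w)$. But whenever $N=\deg B\ge 2$, $B$ has $N-1$ critical points in $\DD$; as $w$ approaches a critical value, two (or more) preimages $z_i$ coalesce, two rows of $M(w)$ become equal, and $\sigma_{\min}(M(w))\to 0$. Hence no uniform lower bound on $\sigma_{\min}$ exists, and the fibrewise estimate $\sum_i|f(z_i)|^2\ge \sigma_{\min}(M(w))^2\sum_j|G_j(w)|^2$ degenerates precisely at those points; your phrase ``uniformly invertible'' cannot be achieved, only invertibility off the finite set of critical values (on non-critical fibres it does hold, since an element of $K_B$ is a rational function with numerator of degree at most $N-1$ and so cannot vanish at $N$ distinct points). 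The theorem is still true, but rescuing your argument needs an idea the proposal does not supply: either recover \emph{jets} of the $G_j$ (values and derivatives, via divided differences/Vandermonde asymptotics) at critical values, or excise small neighbourhoods of the finitely many critical values and invoke the fact that a holomorphic function cannot concentrate its $L^2(dA)$-mass near a fixed interior point (a subharmonicity/mean-value estimate), together with an absorption argument for the lower-order terms $\sum_j e_j'\,(G_j\circ B)$ that contaminate $f'$ at the Dirichlet endpoint. As written --- with the decisive endpoint estimate asserted rather than proved, and resting on a uniformity claim that fails for every $B$ of degree at least two --- the proposal is a plan, not a proof.
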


Since $K_B$ is finite-dimensional, we may take other (equivalent) norms here, such as $\|h_k\|_{\ma{D}_\alpha}$.\\

A concept commonly appearing in operator theory and complex
function theory is that of near invariance, which arises
in the investigations of (almost) invariant subspaces. At the beginning, nearly $S^*$ invariant subspaces of $H^2(\mathbb{D})$ were introduced by Hayashi \cite{Ha}, Hitt  \cite{hitt}, and then Sarason \cite{Sa1} in the context of kernels of Toeplitz operators. There are also many other contributions related with this topic; for example the case of backwards shifts on vector-valued Hardy spaces was analysed in \cite{CCP}. The interested reader can also refer to \cite{CaP1,CaP,LP} and the references therein. Roughly speaking, a subspace $\ma{M}\subset \ma{H}$ is said to be nearly $S^*$ invariant if the zeros of functions in $\ma{M}$ can be divided out without leaving the space. The following definition presents a nearly $T^{-1}$ invariant subspace for any left invertible $T\in \ma{B}(\ma{H})$.

\begin{defn}\label{defn T}
Let $\ma{H}$ be a separable infinite dimensional Hilbert space and $T\in \ma{B}(\ma{H})$ be left invertible.
Then a subspace $\ma{M}\subset \ma{H}$ is said to be nearly $T^{-1}$ invariant if for every $g\in \ma{H}$ such that $Tg\in \ma{M},$   it holds that  $g\in \ma{M}$.
\end{defn}

A shift operator acting on a separable infinite dimensional Hilbert space is the direct generalization of the unilateral shift $S$ and multiplication operator $T_B$ on $H^2(\mathbb{D},\mathbb{C}^l)$. It is isometric and left invertible,
and was abstractly defined in \cite[Chapter 1]{RR} as below.

\begin{defn}\label{defn shift} An operator $T\in \ma{B}(\ma{H})$ is a shift operator if $T$ is an isometry  and
$\|T^{*n} f\|\rightarrow 0$ for all $f\in \ma{H}$ as $n\rightarrow \infty.$  \end{defn}

There are also other equivalent descriptions for a shift operator. For example, an isometry $T\in \ma{B}(\ma{H})$ is a \emph{shift operator} if and only if $T$ is \emph{pure}. Here an isometry $T$  on $\ma{H}$ is \emph{pure} whenever $\bigcap_{n\geq 0} T^n \ma{H}=\{0\}.$  Furthermore,   a pure isometry $T\in \ma{B}(\ma{H})$ has \emph{multiplicity} $m$ if the dimension of the subspace $\mathcal{K}:=\ma{H}\ominus T\ma{H}=\Ker T^*$ is $m$.

Besides, an operator $A\in \ma{B}(\ma{H})$ is $T$-inner if $A$ is analytic (that is, $AT=TA$) and partially isometric. Based on the concept of $T$-inner operator, the Beurling-Lax Theorem for invariant subspaces under a shift operator $T\in \ma{B}(\ma{H})$ was given in \cite[Section 1.12]{RR}, as follows.

\begin{thm} \label{thm A}A subspace $\ma{F}$ of $\ma{H}$ is invariant under the shift operator $T\in \ma{B}(\ma{H})$ if and only if $\ma{F}=A\ma{H}$ for some $T$-inner operator $A$ on $\ma{H}.$\end{thm}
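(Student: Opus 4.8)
The plan is to prove the two implications separately, beginning with the easier converse. Suppose $A$ is $T$-inner, that is, $A$ is a partial isometry with $AT=TA$. Since a partial isometry restricts to an isometry on its initial space $(\Ker A)^\perp$ and annihilates its complement, the range $A\ma{H}=A\big((\Ker A)^\perp\big)$ is the isometric image of a closed subspace, hence closed. Moreover $TA\ma{H}=AT\ma{H}\subseteq A\ma{H}$ because $T\ma{H}\subseteq\ma{H}$ and $A$ is analytic, so $\ma{F}:=A\ma{H}$ is a closed $T$-invariant subspace. This direction requires only the defining properties of a $T$-inner operator.

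For the forward implication, let $\ma{F}$ be a closed $T$-invariant subspace. The first step is to observe that $T|_{\ma{F}}$ is again a shift operator on $\ma{F}$: it is an isometry because $T$ is, and it is pure since $\bigcap_{n\geq 0}(T|_{\ma{F}})^n\ma{F}=\bigcap_{n\geq 0}T^n\ma{F}\subseteq\bigcap_{n\geq 0}T^n\ma{H}=\{0\}$. Applying the Wold decomposition to $T|_{\ma{F}}$ produces the wandering subspace $\ma{E}:=\ma{F}\ominus T\ma{F}$ together with the orthogonal expansion $\ma{F}=\bigoplus_{n\geq 0}T^n\ma{E}$; the analogous expansion $\ma{H}=\bigoplus_{n\geq 0}T^n\ma{K}$ holds with $\ma{K}=\ma{H}\ominus T\ma{H}$ and $\dim\ma{K}=m$.

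The construction of the required $T$-inner operator then proceeds as follows. Once one knows that $\dim\ma{E}\leq\dim\ma{K}$, one may fix a partial isometry $V\colon\ma{K}\to\ma{H}$ whose final space is exactly $\ma{E}$, and define $A$ by $A\big(\sum_{n}T^n x_n\big)=\sum_n T^n Vx_n$ for $x_n\in\ma{K}$. Because the summands $T^n\ma{K}$ (respectively $T^n\ma{E}$) are mutually orthogonal and $T$ is isometric, $A$ is well defined, bounded, and a partial isometry with initial space $\bigoplus_n T^n\big((\Ker V)^\perp\big)$ and final space $\bigoplus_n T^n\ma{E}=\ma{F}$; the identity $AT=TA$ is immediate from the definition. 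Hence $A$ is $T$-inner and $A\ma{H}=\ma{F}$, completing the proof.

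The main obstacle is the dimension inequality $\dim\ma{E}\leq\dim\ma{K}$, i.e. that the multiplicity of $T|_{\ma{F}}$ does not exceed that of $T$. When $m=\infty$ this is automatic for the separable space $\ma{H}$, so the content lies in the finite-multiplicity case; note that the naive projection $P_{\ma{K}}|_{\ma{E}}$ need not be injective (already for $\ma{F}=zH^2\subset H^2$), so a more careful argument is needed. I would resolve this either by a graded ``leading coefficient'' argument---assigning to each nonzero $e\in\ma{E}$ the first nonvanishing component of its expansion in $\bigoplus_n T^n\ma{K}$ and reducing a putative basis of $\ma{E}$ so that these leading terms are independent in $\ma{K}$---or, more expediently, by transporting the whole situation to the model $H^2(\mathbb{D},\mathbb{C}^m)$ via the Wold unitary equivalence and invoking the classical Beurling--Lax--Halmos theorem, which yields $\ma{F}=\Theta H^2(\mathbb{D},\mathbb{C}^k)$ for an inner $\Theta$ with $k=\dim\ma{E}\leq m$ and from which the operator $A$ can be read off directly.
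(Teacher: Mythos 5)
The paper itself offers no proof of this statement: it is imported verbatim from \cite{RR} (Section 1.12), so your attempt has to be judged on its own merits rather than against an argument in the text. Your easy direction is correct (the range of a partial isometry is closed, and $TA\ma{H}=AT\ma{H}\subseteq A\ma{H}$), and your main construction is the standard wandering-subspace argument, essentially the one used in \cite{RR}: granted the inequality $\dim\ma{E}\le\dim\ma{K}$, a partial isometry $V\colon\ma{K}\to\ma{H}$ with final space $\ma{E}=\ma{F}\ominus T\ma{F}$ yields a well-defined operator $A\bigl(\sum_n T^n x_n\bigr)=\sum_n T^n Vx_n$ which is partially isometric, satisfies $AT=TA$, and has range $\bigoplus_n T^n\ma{E}=\ma{F}$. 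All of that is sound.

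The genuine gap is your first proposed proof of the multiplicity bound $\dim\ma{E}\le\dim\ma{K}$. The leading-term reduction uses only the grading $\ma{H}=\bigoplus_n T^n\ma{K}$ and linear algebra, and never invokes the one hypothesis that makes the bound true, namely that $\ma{E}$ is wandering ($T^n\ma{E}\perp\ma{E}$ for $n\ge 1$). Without that hypothesis the conclusion is false, so no argument of the kind you sketch can succeed: take $W=\mathrm{span}\{1,z\}\subset H^2(\mathbb{D})$ with $m=1$; this $W$ is two-dimensional, and it is inert under your reduction --- the orders $0$ and $1$ are already distinct, yet the two leading coefficients are both $1\in\ma{K}$ and can never be made independent. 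Thus either you require independence of leading terms across different orders (false in general, as this example shows), or only within each fixed order (achievable, but it merely gives $\dim\ma{E}\le m\cdot\#\{\mbox{orders}\}$, which is useless). A correct argument must use the orthogonality relations analytically: after the Wold identification $U\colon\ma{H}\to H^2(\mathbb{D},\mathbb{C}^m)$, for $e,e'\in\ma{E}$ the function $\theta\mapsto\langle (Ue)(e^{i\theta}),(Ue')(e^{i\theta})\rangle_{\mathbb{C}^m}$ has all Fourier coefficients equal to zero except the zeroth, hence equals $\langle e,e'\rangle$ a.e.; an orthonormal basis of $\ma{E}$ therefore produces a.e.\ orthonormal vectors in $\mathbb{C}^m$, forcing $\dim\ma{E}\le m$. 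Your second alternative --- passing to $H^2(\mathbb{D},\mathbb{C}^m)$ and quoting the classical Beurling--Lax--Halmos theorem --- is logically sound and does close the gap, but note that it imports the inequality (indeed the whole theorem, in its model form) rather than proving it; since the paper also merely cites \cite{RR}, that is acceptable, but it makes your proof a reduction to a quoted result, and the branch of your write-up that aims to be self-contained is the one that fails.
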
 It is natural to ask how to give an expression for nearly $T^{-1}$ invariant subspaces in $\ma{H}$. Motivated by this question, we organize the rest of the paper  as follows. In Section 2, using the formulae of $S^*$ invariant subspaces in vector-valued Hardy space, we present a characterization for nearly $T^{-1}$ invariant subspaces when $T\in \ma{B}(\ma{H})$ is a shift operator with \emph{multiplicity} $m$. Going beyond this, noting that a finite Blaschke product $B$ is a multiplier of Dirichlet-type spaces, we give some descriptions for nearly $T_B^{-1}$ invariant subspaces in Section 3, extending considerably some work of Erard \cite{Er}.

\section{nearly $T^{-1}$ invariant subspaces}

In this section, we always suppose $T\in \ma{B}(\ma{H})$ is a shift operator with multiplicity $m$. This gives \begin{eqnarray}1\leq l:=\mbox{dim} [\ma{M}\ominus (\ma{M} \cap T\ma{H})]\leq m\label{l}\end{eqnarray} for every nonzero nearly $T^{-1}$ invariant subspace $\ma{M}\subset\ma{H}$. Denote an orthonormal basis of $\ma{K}:=\ma{H}\ominus T\ma{H}$ by $e_1,\cdots, e_m$. And let $\delta_j^m=(0,\cdots,0, 1,0,\cdots,0)$ with $1$ in the $j$-th place be an orthonormal basis of $K:=H^2(\mathbb{D}, \mathbb{C}^m)\ominus zH^2(\mathbb{D}, \mathbb{C}^m)$ for $j=1, 2, \cdots, m.$ Based on the following two orthogonal decompositions
\begin{eqnarray*} \ma{H}=\bigoplus_{i=0}^\infty  T^i\ma{K}\;\;\mbox{and}\;\;H^2(\mathbb{D}, \mathbb{C}^m)=\bigoplus_{i=0}^\infty  z^i K,\end{eqnarray*}
 there exists a unitary mapping $U:\;\ma{H}\rightarrow H^2(\mathbb{D}, \mathbb{C}^m)$ defined by
\begin{eqnarray}
U(T^i e_j)=z^i\delta_j^m.\label{eq:defu}\end{eqnarray}

So the following commutative diagram \eqref{commute} holds for the unilateral shift $S:\;H^2(\mathbb{D}, \mathbb{C}^m)\rightarrow H^2(\mathbb{D}, \mathbb{C}^m)$ and the shift operator $T: \;\ma{H} \rightarrow \ma{H}$ with multiplicity $m$.
  \begin{eqnarray}\begin{CD}
\ma{H} @>T>> \ma{H}\\
@VV U V @VV U V\\
H^2(\mathbb{D}, \mathbb{C}^m) @>S>> H^2(\mathbb{D}, \mathbb{C}^m),
\end{CD}\label{commute}\end{eqnarray}
which implies the following equations
\begin{eqnarray}&&S^nU=UT^n\;\mbox{for}\;n\in \mathbb{N}_0.\label{nn}\end{eqnarray}
 Using the unitary mapping $U: \ma{H}\rightarrow H^2(\mathbb{D}, \mathbb{C}^m)$, the following lemma holds, where the superscript `t' means the transpose of a matrix.

\begin{lem}\label{lem FG} Let $\ma{M}\subset \ma{H}$ be a nonzero nearly $T^{-1}$ invariant subspace and  $G_0:=[g_1, g_2, \cdots, g_l]^t$ be a matrix containing an orthonormal basis $(g_i)_{i\in\{1,\cdots,l\}}$ of $\ma{M}\ominus (\ma{M}\cap T\ma{H})$. Then

\begin{eqnarray}F_0:=[Ug_1, Ug_2, \cdots, Ug_l]^t\label{F0}\end{eqnarray}
is a matrix containing an orthonormal basis $(Ug_i)_{i\in\{1,\cdots,l\}}$ of $U\ma{M}\ominus (U\ma{M}\cap zH^2(\mathbb{D},\mathbb{C}^m)).$
\end{lem}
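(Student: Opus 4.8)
The plan is to exploit that $U$ is a \emph{unitary} operator, so that it preserves orthonormality and, being bijective, commutes with intersections and with the formation of orthogonal complements of subspaces. The entire statement then reduces to identifying the images under $U$ of the two subspaces $\ma{M}$ and $\ma{M}\cap T\ma{H}$. As a preliminary observation, since $(g_i)_{i\in\{1,\dots,l\}}$ is orthonormal in $\ma{H}$ and $U$ preserves inner products, $(Ug_i)_{i\in\{1,\dots,l\}}$ is automatically an orthonormal family in $H^2(\DD,\mathbb{C}^m)$. Hence the only real content is to show that these vectors span exactly $U\ma{M}\ominus(U\ma{M}\cap zH^2(\DD,\mathbb{C}^m))$.

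The key step is to compute $U(T\ma{H})$. Using the intertwining relation $SU=UT$ coming from \eqref{nn}, together with the fact that $U$ maps $\ma{H}$ onto all of $H^2(\DD,\mathbb{C}^m)$, I obtain $U(T\ma{H})=S\,U\ma{H}=SH^2(\DD,\mathbb{C}^m)=zH^2(\DD,\mathbb{C}^m)$. Because $U$ is a bijection it respects intersections, so $U(\ma{M}\cap T\ma{H})=U\ma{M}\cap U(T\ma{H})=U\ma{M}\cap zH^2(\DD,\mathbb{C}^m)$.

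Finally, since $U$ is unitary it carries the orthogonal complement of a subspace inside a larger subspace to the corresponding complement of their images: for $x\in\ma{M}$ and $n$ ranging over $\ma{M}\cap T\ma{H}$ one has $\la x,n\ra=\la Ux,Un\ra$, so $x\perp(\ma{M}\cap T\ma{H})$ if and only if $Ux\perp(U\ma{M}\cap zH^2(\DD,\mathbb{C}^m))$. Consequently $U[\ma{M}\ominus(\ma{M}\cap T\ma{H})]=U\ma{M}\ominus(U\ma{M}\cap zH^2(\DD,\mathbb{C}^m))$, and applying $U$ to the orthonormal basis $(g_i)$ yields an orthonormal basis $(Ug_i)$ of the right-hand side, which is precisely the content of $F_0$ in \eqref{F0}.

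I do not expect a genuine obstacle here: the argument is essentially bookkeeping about how the unitary $U$ interacts with the shift structure. The one point that deserves care is the surjectivity of $U$ onto $H^2(\DD,\mathbb{C}^m)$, which is what makes $U(T\ma{H})$ equal to \emph{all} of $zH^2(\DD,\mathbb{C}^m)$ rather than merely some subspace of it; this rests on the matched orthogonal decompositions $\ma{H}=\bigoplus_{i\ge 0}T^i\ma{K}$ and $H^2(\DD,\mathbb{C}^m)=\bigoplus_{i\ge 0}z^iK$ used to define $U$ in \eqref{eq:defu}. It is worth remarking that near $T^{-1}$ invariance of $\ma{M}$ is not actually invoked in this particular lemma; it is the unitarity of $U$ that does all the work.
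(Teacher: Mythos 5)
Your proof is correct. The paper actually states this lemma with no proof at all, treating it as immediate from the construction of $U$ via the matched decompositions $\ma{H}=\bigoplus_{i\ge 0}T^i\ma{K}$, $H^2(\mathbb{D},\mathbb{C}^m)=\bigoplus_{i\ge 0}z^iK$ and the intertwining relation $SU=UT$; your argument makes exactly those implicit details explicit (unitarity preserving orthonormality and complements, injectivity giving $U(\ma{M}\cap T\ma{H})=U\ma{M}\cap zH^2(\mathbb{D},\mathbb{C}^m)$, surjectivity giving $U(T\ma{H})=zH^2(\mathbb{D},\mathbb{C}^m)$), including the accurate observation that near $T^{-1}$ invariance of $\ma{M}$ is never used, so it is the same approach.
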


The lemma below shows the link of nearly invariant subspaces between similar operators.

\begin{lem}\label{lem similar} Suppose $\ma{H}_1$ and $\ma{H}_2$ are two Hilbert spaces and $T_1\in \ma{B}(\ma{H}_1)$ and $T_2\in \ma{B}(\ma{H}_2)$ are two left invertible operators. Assume there exists  an invertible operator $V: \ma{H}_1\rightarrow \ma{H}_2$ so that $T_2= V T_1 V^{-1}$. Let $\ma{M}$ be a nearly $T_1^{-1}$ invariant subspace in $\ma{H}_1$, then $V(\ma{M})$ is a nearly $T_2^{-1}$ invariant subspace in $\ma{H}_2.$ \end{lem}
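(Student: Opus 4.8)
The plan is to verify the defining condition of near $T_2^{-1}$ invariance (Definition \ref{defn T}) directly, by transporting the hypothesis on $\ma{M}$ across the similarity $V$. Before doing so I would record the one structural point: since $V$ is an invertible operator (hence bounded with bounded inverse), it is a topological isomorphism, so $V(\ma{M})$ is again a closed subspace of $\ma{H}_2$ and is therefore a legitimate candidate. Moreover $V$ restricts to a bijection between $\ma{M}$ and $V(\ma{M})$, which is the property I will exploit repeatedly.

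The core of the argument is a short conjugation computation. I would take an arbitrary $h\in\ma{H}_2$ with $T_2 h\in V(\ma{M})$, and set $g:=V^{-1}h\in\ma{H}_1$, which is well defined because $V$ is invertible. Using the intertwining relation $T_2=VT_1V^{-1}$ I rewrite
$$T_2 h = VT_1V^{-1}h = VT_1 g,$$
so that the assumption $T_2 h\in V(\ma{M})$ becomes $VT_1 g\in V(\ma{M})$. Applying $V^{-1}$ and using that $V^{-1}$ carries $V(\ma{M})$ onto $\ma{M}$ gives $T_1 g\in\ma{M}$.

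At this stage the near $T_1^{-1}$ invariance of $\ma{M}$ applies verbatim: from $g\in\ma{H}_1$ with $T_1 g\in\ma{M}$ we conclude $g\in\ma{M}$, whence $h=Vg\in V(\ma{M})$. As $h$ was arbitrary, $V(\ma{M})$ is nearly $T_2^{-1}$ invariant. I do not expect a genuine obstacle here; the statement is essentially the observation that near inverse-invariance is a similarity invariant, and the proof is purely algebraic once the intertwining identity is recorded. The only points meriting care are that $T_1$ and $T_2$ are both left invertible, so that Definition \ref{defn T} is applicable on each side, and that the passage back and forth between $\ma{M}$ and $V(\ma{M})$ is exact, both of which are guaranteed by the invertibility of $V$ together with the identity $T_2 V = V T_1$.
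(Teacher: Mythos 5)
Your proof is correct and follows essentially the same route as the paper's: conjugate by $V$, pull the hypothesis $T_2h\in V(\ma{M})$ back to $T_1V^{-1}h\in\ma{M}$, apply near $T_1^{-1}$ invariance, and push forward again. Your added observation that $V(\ma{M})$ is closed (since $V$ is a topological isomorphism) is a worthwhile point the paper leaves implicit, but otherwise the two arguments coincide.
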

\begin{proof}  For any $h\in \ma{H}_2,$ if $T_2 h\in V(\ma{M}),$ we need to show $h\in V(\ma{M}).$ Since $T_2 h=VT_1 V^{-1}h \in V(\ma{M}),$ it follows that $ T_1 V^{-1} h\in \ma{M},$ so that $V^{-1} h\in \ma{M}$. This means $h\in V(\ma{M}),$ ending the proof.\end{proof}

\begin{lem} \label{lem UT} Suppose that $T: \mathcal{H}\rightarrow \mathcal{H}$ is a shift operator,
and let $U$ be as \eqref{eq:defu}. Then
\begin{eqnarray}
U^*[(Ug)h]=h(T)g\label{fug},
\end{eqnarray}
for any $g\in \ma{H},\;h\in H^2(\mathbb{D})$ such that $(Ug)h\in H^2(\mathbb{D},\mathbb{C}^m)$.
\end{lem}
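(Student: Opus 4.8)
The engine of the proof is the intertwining relation \eqref{nn}: multiplying $UT^n=S^nU$ on the left by $U^*$ and using $U^*U=I$ gives $T^n=U^*S^nU$ for every $n\in\mathbb{N}_0$. Since $S$ is multiplication by $z$ on $H^2(\DD,\mathbb{C}^m)$, this reads $T^ng=U^*\bigl(z^n(Ug)\bigr)=U^*[(Ug)z^n]$ for all $g\in\ma{H}$. Hence the asserted identity \eqref{fug} holds whenever $h(z)=z^n$ is a monomial, and by linearity it holds for every polynomial $h(z)=\sum_{k}c_kz^k$, where $U^*[(Ug)h]=\sum_k c_kT^kg=h(T)g$. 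This part is completely rigorous and requires only \eqref{nn} and the fact that $U$ is unitary.

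For general $h\in H^2(\DD)$ the plan is to pass to the limit through the Taylor partial sums $h_N(z)=\sum_{k=0}^N c_kz^k$. By the polynomial case, $h_N(T)g=U^*[(Ug)h_N]$, so I would let $N\to\infty$ on both sides and interpret $h(T)g$ as the resulting limit $\sum_k c_kT^kg$. The target $U^*[(Ug)h]$ is well defined and lies in $\ma{H}$ precisely because of the hypothesis $(Ug)h\in H^2(\DD,\mathbb{C}^m)$ together with the boundedness of the unitary $U^*$; this is exactly the role played by that hypothesis.

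To identify the limit I would expand everything against the orthonormal basis $\{T^ne_j\}$ of $\ma{H}$. Writing $Ug=\sum_{i\ge 0}z^iv_i$ with $v_i\in\mathbb{C}^m$ (the block Taylor coefficients relative to $H^2(\DD,\mathbb{C}^m)=\bigoplus_i z^iK$), the product $(Ug)h$ has block coefficients $b_n=\sum_{i=0}^n c_{n-i}v_i$, and since $U^*(z^n\delta_j^m)=T^ne_j$ one gets $\langle U^*[(Ug)h],T^ne_j\rangle=\langle (Ug)h,z^n\delta_j^m\rangle=(b_n)_j$. On the other hand, using $g=\sum_i T^i\iota(v_i)$, where $\iota:\mathbb{C}^m\to\ma{K}$ sends the standard basis to $e_1,\dots,e_m$, and the orthogonality of the ranges $T^p\ma{K}$, one checks that $\langle T^kg,T^ne_j\rangle$ contributes only when $k+i=n$, yielding $\langle h(T)g,T^ne_j\rangle=\sum_{i=0}^n c_{n-i}(v_i)_j=(b_n)_j$. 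Matching coefficients then forces \eqref{fug}.

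The main obstacle is the convergence step: one must justify that $(Ug)h_N\to (Ug)h$ in $H^2(\DD,\mathbb{C}^m)$, equivalently that $\sum_k c_kT^kg$ converges in $\ma{H}$ to $U^*[(Ug)h]$, and here the difficulty is genuine, since multiplication by the symbol $Ug$ need not be bounded and Taylor truncation need not converge in the relevant norm. I would therefore finish through the coefficient computation above rather than a naive norm estimate: the element $U^*[(Ug)h]$ has the explicit orthogonal expansion $\sum_n T^n\iota(b_n)$ with $\sum_n|b_n|^2=\|(Ug)h\|^2<\infty$, so it is the honest limit of its partial sums, and these partial sums are exactly $h_N(T)g$ reorganized by the substitution $n=k+i$. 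Controlling this reorganization of the double sum $\sum_{i,k}c_kT^{k+i}\iota(v_i)$, using the isometry of $U^*$ and the finiteness of $\sum_n|b_n|^2$, is the one delicate point and is the crux of the argument.
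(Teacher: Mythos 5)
Your treatment of monomials and polynomials is exactly the paper's: both arguments push the intertwining relation \eqref{nn} through $U^*$ and use linearity, and both then try to reach a general $h\in H^2(\mathbb{D})$ by letting the Taylor partial sums $h_N$ tend to $h$. The problem is that your proposal stops precisely where the real work begins, and you say so yourself: you never prove that $h_N(T)g$ converges in $\ma{H}$, equivalently that $(Ug)(h-h_N)\rightarrow 0$ in $H^2(\mathbb{D},\mathbb{C}^m)$. Without that, the right-hand side $h(T)g=\sum_k c_kT^kg$ has not been shown to exist as an element of $\ma{H}$, and your coefficient computation is only conditional: writing $\langle h(T)g,T^ne_j\rangle=\sum_k c_k\langle T^kg,T^ne_j\rangle$ already interchanges the inner product with an infinite sum, i.e.\ presupposes the very convergence at issue. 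Your unconditional half is fine --- $U^*[(Ug)h]=\sum_n T^n\iota(b_n)$ with $\sum_n|b_n|^2<\infty$, where $b_n=\sum_{i+k=n}c_kv_i$ --- but the partial sums of that orthogonal series are $U^*P_N[(Ug)h]$ (with $P_N$ denoting truncation to Taylor coefficients of degree at most $N$), not $h_N(T)g$; their difference is $U^*\bigl[(I-P_N)\bigl((Ug)h_N\bigr)\bigr]$, and showing that this tends to zero is the whole problem. It is not a routine reorganization: multiplication by $Ug$ is a closed but in general unbounded operator, convergence $h_N\rightarrow h$ in $H^2(\mathbb{D})$ does not give convergence in its graph norm, and sharp truncation can destroy the cancellations that make $(Ug)h$ square-summable. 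So, as written, your proposal is an outline with the decisive step missing.

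What makes this worth spelling out is that the paper's own proof has exactly the same hole, only hidden: it asserts $U^*[(Ug)h]=U^*[(Ug)\lim_n q_n]=\lim_n q_n(T)g$ with no justification for pulling the limit through $U^*$ composed with multiplication by $Ug$, an exchange that is legitimate when that multiplication is bounded but needs a substitute argument here. So you have correctly isolated the one genuine difficulty in the lemma --- which the paper does not even acknowledge --- and your coefficient identity even suggests the natural repair: define $h(T)g$ outright by the orthogonal expansion $\sum_n T^n\iota(b_n)$, equivalently as $U^*[(Ug)h]$, after which the lemma holds essentially by construction (this is also all that is actually used later, in Theorem \ref{thm nearlyTC}). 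But if $h(T)g$ is to mean the norm limit of $\sum_{k\le N}c_kT^kg$, as both you and the paper intend, then neither your argument nor the paper's closes the truncation-convergence step, and your proposal cannot be counted as a complete proof.
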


\begin{proof}From the commutative diagram \eqref{commute}, we have $Ug\in H^2(\mathbb{D},\mathbb{C}^m)$ and $(Ug)z^n=S^n (Ug),$  and then \eqref{nn}  implies that
\begin{eqnarray*} U^*[(Ug)z^n]=U^*S^n(Ug)=T^nU^*Ug=T^ng, \;\mbox{for}\;n\in \mathbb{N}_0. \end{eqnarray*}

For any polynomial $p(z)=\sum_{k=0}^n a_kz^k \in H^2(\mathbb{D}),$ since $U^*$ is a linear operator, we have
\begin{eqnarray*} U^*[(Ug)p(z)]=\sum_{k=0}^n a_k T^k g=p(T)g,\end{eqnarray*} where the operator $p(T)=\sum_{k=0}^n a_k T^k .$ So for any $h(z)=\sum_{k=0}^\infty h_kz^k\in H^2(\mathbb{D})$ such that $(Ug)h\in H^2(\mathbb{D},\mathbb{C}^m)$, there exists a sequence of polynomials $q_n(z)=\sum_{k=0}^nh_kz^k\in H^2(\mathbb{D})$ such that $q_n\rightarrow h$ in $H^2(\mathbb{D})$, as $n\rightarrow \infty.$  Then we deduce that
\begin{eqnarray*} U^*[(Ug)h(z)]=U^*[(Ug)(\lim\limits_{n\rightarrow \infty}q_n(z))]=\lim\limits_{n\rightarrow \infty}q_n(T)g=h(T)g,\end{eqnarray*} with the operator $h(T)=\sum_{k=0}^\infty h_kT^k$. \end{proof}
We are now in a position to state a theorem for nearly $T^{-1}$ invariant subspaces in $\ma{H}$. Recall that $\Phi\in H^\infty(\mathbb{D}, \ma{L}(\mathbb{C}^{l'}, \mathbb{C}^l))$ is an operator-valued inner function if $\Phi(e^{iw})$ is an isometry a.e. on $\mathbb{T}.$

\begin{thm}\label{thm nearlyTC}
Suppose $T: \mathcal{H}\rightarrow \mathcal{H}$ is a shift operator with  multiplicity
$m$ and $\ma{M}\subset \ma{H}$ is a nonzero nearly $T^{-1}$ invariant subspace. Let $G_0:=\begin{bmatrix}g_1, g_2, \cdots,  g_l\end{bmatrix}^t$ be a matrix containing an orthonormal basis $(g_i)_{i\in\{1,\cdots,l\}}$ of $\ma{M}\ominus(\ma{M}\cap T \ma{H})$. Then there exist a nonnegative integer $l'\leq l$ and an operator-valued inner function $\Phi$ belonging to $H^\infty(\mathbb{D}, \ma{L}(\mathbb{C}^{l'}, \mathbb{C}^l))$, unique up to unitary equivalence, such that
\begin{eqnarray*}\ma{M}=\{f\in \ma{H}:\;\exists h\in H^2(\mathbb{D},\mathbb{C}^l)\ominus \Phi H^2(\mathbb{D},\mathbb{C}^{l'}),\;f=h(T) G_0\}. \label{c1} \end{eqnarray*}Besides, there is an isometric mapping
\begin{eqnarray}Q:\;\ma{M}\rightarrow H^2(\mathbb{D},\mathbb{C}^l)\ominus \Phi H^2(\mathbb{D},\mathbb{C}^{l'})\;\mbox{given by}\; Q(f)=h.\label{Q}\end{eqnarray}
\end{thm}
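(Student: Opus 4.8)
The plan is to transport the problem to the vector-valued Hardy space $H^2(\mathbb{D},\mathbb{C}^m)$ through the unitary $U$, apply the known structure theory for nearly backward-shift invariant subspaces there, and then pull the description back to $\ma{H}$ by means of Lemma \ref{lem UT}. Equation \eqref{nn} with $n=1$ reads $SU=UT$, hence $S=UTU^{-1}$, so $U$ intertwines $T$ with the unilateral shift $S$ on $H^2(\mathbb{D},\mathbb{C}^m)$. Applying Lemma \ref{lem similar} with $V=U$, $T_1=T$ and $T_2=S$ shows that $\ma{N}:=U\ma{M}$ is a nearly $S^{-1}$ invariant subspace of $H^2(\mathbb{D},\mathbb{C}^m)$; since $S$ is a shift, this is exactly the classical near invariance condition (a function of $\ma{N}$ vanishing at the origin may be divided by $z$ without leaving $\ma{N}$). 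By Lemma \ref{lem FG}, the rows of $F_0$ in \eqref{F0} form an orthonormal basis of the $l$-dimensional wandering-type space $\ma{N}\ominus(\ma{N}\cap zH^2(\mathbb{D},\mathbb{C}^m))$.

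Next I would invoke the description of nearly $S^*$ invariant subspaces of $H^2(\mathbb{D},\mathbb{C}^m)$ obtained in \cite{CCP}. This produces an integer $l'\le l$ and an operator-valued inner function $\Phi\in H^\infty(\mathbb{D},\ma{L}(\mathbb{C}^{l'},\mathbb{C}^l))$, unique up to unitary equivalence by the vector-valued Beurling--Lax theorem (a special case of Theorem \ref{thm A} applied to $S$), together with an isometry $\Psi$ from $K_\Phi:=H^2(\mathbb{D},\mathbb{C}^l)\ominus\Phi H^2(\mathbb{D},\mathbb{C}^{l'})$ onto $\ma{N}$ defined by $\Psi(h)=h^tF_0=\sum_{i=1}^l h_i\,(Ug_i)$. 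Here $\Phi H^2(\mathbb{D},\mathbb{C}^{l'})$ is precisely the $S$-invariant subspace of $H^2(\mathbb{D},\mathbb{C}^l)$ singled out by the near invariance, so its Beurling--Lax uniqueness is exactly what the uniqueness clause of the theorem demands.

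Finally I would pull everything back through $U^*$. Since $U$ is unitary and $U\ma{M}=\ma{N}=\mathrm{ran}\,\Psi$, the composition $U^*\Psi\colon K_\Phi\to\ma{M}$ is an isometry onto $\ma{M}$, so $Q:=(U^*\Psi)^{-1}$ is the desired isometric map \eqref{Q} with $Q(f)=h$. It then remains to identify $U^*\Psi(h)$ with $h(T)G_0$. When $h$ has polynomial entries, each product $h_i\,(Ug_i)$ lies in $H^2(\mathbb{D},\mathbb{C}^m)$, so Lemma \ref{lem UT} applies termwise and gives $U^*\Psi(h)=\sum_{i=1}^l U^*[(Ug_i)h_i]=\sum_{i=1}^l h_i(T)g_i=h(T)G_0$, which is precisely \eqref{fug} read coordinatewise. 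Consequently $\ma{M}=\{h(T)G_0:\ h\in K_\Phi\}$.

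The step I expect to be the main obstacle is extending the identity $U^*\Psi(h)=h(T)G_0$ from polynomial $h$ to arbitrary $h\in K_\Phi$: although the full sum $h^tF_0$ lies in $\ma{N}\subset H^2(\mathbb{D},\mathbb{C}^m)$, the individual products $h_i\,(Ug_i)$ need not belong to $H^2(\mathbb{D},\mathbb{C}^m)$, so Lemma \ref{lem UT} cannot be applied termwise in general. The clean way around this is to take $h(T)G_0:=U^*(h^tF_0)$ as the definition---legitimate since $h^tF_0\in\ma{N}$ and $U^*$ is unitary---and to use the polynomial case together with the isometry $\Psi$ and the continuity of $U^*$ to confirm that this agrees with the power-series operator $\sum_i h_i(T)g_i$ wherever the latter series converges. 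A secondary point requiring care is a faithful transcription of the structure theorem of \cite{CCP}, ensuring in particular that the orthonormal basis matrix $F_0$ provided by Lemma \ref{lem FG} is exactly the one entering that description.
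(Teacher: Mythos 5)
Your proposal follows essentially the same route as the paper's own proof: conjugate by $U$, use Lemma \ref{lem similar} to see that $U\ma{M}$ is nearly $S^*$ invariant, invoke the structure theorem of \cite{CCP} together with the vector-valued Beurling--Lax theorem to produce $\Phi$ and the isometry, and pull back through $U^*$ via Lemma \ref{lem UT}, exactly as the paper does with $Q=JU$. The one obstacle you flag---that Lemma \ref{lem UT} is applied termwise even though the individual products $(Ug_i)h_i$ need not lie in $H^2(\mathbb{D},\mathbb{C}^m)$---is a genuine subtlety that the paper passes over silently, and your remedy (defining $h(T)G_0:=U^*(hF_0)$, which is legitimate since $hF_0\in U\ma{M}$, and verifying agreement with $\sum_i h_i(T)g_i$ on polynomials by density) is a sound way to close it.
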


\begin{proof} From Lemma \ref{lem similar} and the commutative diagram \eqref{commute}, it follows $U\ma{M}$ is a nearly $S^*$ invariant subspace in $H^2(\mathbb{D}, \mathbb{C}^m).$
From \cite[Theorem 4.4]{CCP}, there exists an isometric mapping
\begin{eqnarray}J:\;U\ma{M}\rightarrow \ma{F}'\;\;\mbox{given by }\;J(h F_0)=h,\label{J}\end{eqnarray}
with a subspace $$\ma{F}':=\{h\in H^2(\mathbb{D}, \mathbb{C}^l):\; \exists\; Uf\in U \ma{M},\;\;Uf= h F_0 \}$$ and $F_0$ given in \eqref{F0}.  Moreover, $\ma{F}'$ is an $S^*$ invariant subspace in $H^2(\mathbb{D},\mathbb{C}^l).$ The Beurling-Lax Theorem on $H^2(\mathbb{D}, \mathbb{C}^l)$ implies there exist a nonnegative integer $l'\leq l$ and an inner function $\Phi\in H^\infty(\mathbb{D}, \ma{L}(\mathbb{C}^{l'}, \mathbb{C}^l)),$ unique to unitary equivalence, such that
$$\ma{F}'=H^2(\mathbb{D},\mathbb{C}^l)\ominus \Phi H^2(\mathbb{D},\mathbb{C}^{l'}).$$

Given any $f\in \ma{M},$ the isometric mapping \eqref{J} implies there exists $h=[h_1, \cdots, h_l]\in \ma{F}'$ such that
 \begin{eqnarray*}Uf=hF_0=[h_1, \cdots, h_l][Ug_1, \cdots, Ug_l]^t
=\sum_{i=1}^l(Ug_i)h_i,\end{eqnarray*} with $\|f\|=\|Uf\|=\|h\|.$
Then the formula \eqref{fug} gives
 \begin{eqnarray*}f=\sum_{i=1}^l U^*[(U g_i)h_i]
 =\sum_{i=1}^l h_i(T) g_i=h(T) G_0,\label{fg}\end{eqnarray*} with $h(T)=[h_1(T),\cdots, h_l(T)].$ We therefore have
\begin{eqnarray*}\ma{M}=\{f\in \ma{H}:\;\exists h\in H^2(\mathbb{D},\mathbb{C}^l)\ominus \Phi H^2(\mathbb{D},\mathbb{C}^{l'}),\;f=h(T) G_0\},\end{eqnarray*}
and there is an isometric mapping $Q: \ma{M}\rightarrow H^2(\mathbb{D},\mathbb{C}^l)\ominus \Phi H^2(\mathbb{D},\mathbb{C}^{l'})$ given by $Q=JU$ satisfying \eqref{Q}.
\end{proof}

\begin{rem}Let $$ \ma{M}'=U^*\ma{F}'=U^*[H^2(\mathbb{D},\mathbb{C}^l)\ominus \Phi H^2(\mathbb{D},\mathbb{C}^{l'})]\subset \ma{H}.$$
Then $\ma{M}'$ is a $T^*$ invariant subspace and there also exists an isometric mapping $\widetilde{Q}=U^*JU: \ma{M}\rightarrow \ma{M}'$ defined by $f\mapsto U^*h.$ \end{rem}

Given a degree-$m$ Blaschke product $B$, $T_{B}: H^2(\mathbb{D})\rightarrow H^2(\mathbb{D})$ is a   shift operator with  multiplicity $m$, so we can deduce a corollary.

\begin{cor}\label{cor nearlyBm} Let $\ma{M}\subset H^2(\mathbb{D})$ is a nonzero nearly $T_B^{-1}$ invariant subspace with a degree-$m$ Blaschke product $B$. Let the matrix $G_0(z):=[g_1(z), g_2(z), \cdots,  g_l(z)]^t$ contain an orthonormal basis $(g_i(z))_{i\in\{1,\cdots,l\}}$ of $\ma{M}\ominus(\ma{M}\cap B H^2(\mathbb{D}))$.
Then there exist a nonnegative integer $l'\leq l$ and an operator-valued inner function $\Phi\in H^\infty(\mathbb{D}, \ma{L}(\mathbb{C}^{l'}, \mathbb{C}^l))$, unique up to unitary equivalence, such that
\begin{eqnarray*} \ma{M} =\{f\in H^2(\mathbb{D}): \exists h\in H^2(\mathbb{D},\mathbb{C}^l)\ominus \Phi H^2(\mathbb{D},\mathbb{C}^{l'}), f(z)=h(T_B)G_0(z)\}.\quad\quad\label{Mb1}\end{eqnarray*}
\end{cor}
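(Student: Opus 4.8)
The plan is to recognize the statement as the specialization of Theorem~\ref{thm nearlyTC} to the concrete setting $\ma{H}=H^2(\mathbb{D})$ and $T=T_B$. Thus essentially the only work is to check that $T_B$ satisfies the hypotheses of that theorem, namely that it is a shift operator of multiplicity $m$, after which the conclusion transfers verbatim, with $\ma{K}=\ma{H}\ominus T\ma{H}$ realized concretely as the model space $K_B=H^2(\mathbb{D})\ominus B H^2(\mathbb{D})$.

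First I would verify that $T_B$ is an isometry. Since $B$ is a finite Blaschke product it is inner, so $|B(e^{iw})|=1$ a.e.\ on $\mathbb{T}$, and hence $\|T_B f\|^2=\frac{1}{2\pi}\int_0^{2\pi}|B(e^{iw})|^2|f(e^{iw})|^2\,dw=\|f\|^2$ for every $f\in H^2(\mathbb{D})$; in particular $T_B^*T_B=I$, so $T_B$ is left invertible. Next I would check purity, i.e.\ $\bigcap_{n\geq 0}T_B^n H^2(\mathbb{D})=\bigcap_{n\geq 0}B^n H^2(\mathbb{D})=\{0\}$: any nonzero function in the intersection would be divisible by the inner function $B^n$ for every $n$, which is impossible for a non-constant $B$ by the uniqueness of inner--outer factorization. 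By Definition~\ref{defn shift} (or the equivalent purity criterion recalled just after it), $T_B$ is therefore a shift operator.

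It then remains to identify the multiplicity. Here $\ma{K}=H^2(\mathbb{D})\ominus B H^2(\mathbb{D})=K_B$ is exactly the model space of $B$, and since $B$ has degree $m$ one has $\dim K_B=m$; this is the step I would treat most carefully, invoking the standard fact that the model space of a degree-$m$ finite Blaschke product is $m$-dimensional (spanned, for instance, by reproducing kernels at the zeros of $B$, or simply by $\{1,z,\dots,z^{m-1}\}$ when $B=z^m$). With multiplicity $m$ confirmed, Theorem~\ref{thm nearlyTC} applies directly: writing $\ma{M}\ominus(\ma{M}\cap B H^2(\mathbb{D}))$ for the defect space and $G_0$ for a matrix of an orthonormal basis of it, one obtains a nonnegative integer $l'\leq l$ and an inner $\Phi\in H^\infty(\mathbb{D},\ma{L}(\mathbb{C}^{l'},\mathbb{C}^l))$, unique up to unitary equivalence, yielding the representation $f=h(T_B)G_0$ with $h\in H^2(\mathbb{D},\mathbb{C}^l)\ominus\Phi H^2(\mathbb{D},\mathbb{C}^{l'})$.

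Since every hypothesis of Theorem~\ref{thm nearlyTC} is met, there is no genuine obstacle; the proof is a direct transcription. The only points demanding attention are the verification that $T_B$ is pure and the dimension count $\dim K_B=m$, both of which are classical facts about inner functions and model spaces. One should also confirm that the near-invariance notion for $T_B$ coincides with Definition~\ref{defn T}, i.e.\ that $\ma{M}$ is nearly $T_B^{-1}$ invariant precisely when $Bg\in\ma{M}$ forces $g\in\ma{M}$, which is immediate from the definition since $T_B g=Bg$.
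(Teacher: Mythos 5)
Your proposal is correct and follows essentially the same route as the paper: the paper deduces the corollary directly from Theorem~\ref{thm nearlyTC} by asserting that $T_B$ on $H^2(\mathbb{D})$ is a shift operator of multiplicity $m$, which is exactly the fact you verify (isometry since $B$ is inner, purity of the intersection $\bigcap_n B^n H^2(\mathbb{D})$, and $\dim K_B=m$). Your write-up simply makes explicit the standard verifications that the paper leaves implicit.
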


Here we show an example to illustrate Corollary \ref{cor nearlyBm}.

\begin{exm} Given $a\in \mathbb{D}\setminus\{0\}$ and some $m\in \mathbb{N}_0,$ denote a subspace $$\ma{M}=\varphi_a(z)\cdot\left(\bigvee\{z^{2k}:\;k\in \mathbb{N}\} \oplus \bigvee\{z, z^3, \cdots, z^{2m+1}\}\right),$$ which is nearly $T_{{z^2}}^{-1}$ invariant in $H^2(\mathbb{D})$. It holds that
$$\ma{M}\ominus(\ma{M}\cap z^2 \ma{M})=\la g_1(z), g_2(z)\ra$$ with the function matrix $G_0(z):=[g_1(z),g_2(z)]^t=\varphi_a(z)\cdot \begin{bmatrix}1, z \end{bmatrix}^t.$

For any $f\in \ma{M}$, it turns out
\begin{eqnarray*}f(z)= \left[\sum_{i=0}^\infty a_{i1} z^{2i}, \sum_{i=0}^m a_{i,2}z^{2i}\right]G_0(z),  \end{eqnarray*} where $(a_{i,j})_{i,j}$ satisfies
\begin{eqnarray*}[a_{i,1}, a_{i,2}] \in\left\{\begin{array}{ll}
      \mathbb{C}\times \mathbb{C}, &i=0,1,\cdots,m, \\
      \mathbb{C}\times \{0\},  &i\geq m+1.\end{array}                                  \right. \end{eqnarray*}
The formula in Corollary \ref{cor nearlyBm} together the above facts imply
\begin{eqnarray*}\ma{M}=\left\{f\in H^2(\mathbb{D}): \exists h\in H^2(\mathbb{D}, \mathbb{C}^2)\ominus \Phi(z) H^2(\mathbb{D}), f(z)=h(T_{z^2})G_0(z)\right\},\end{eqnarray*}
with an operator-valued inner function $\Phi(z)=z^{m+1}(0,1)\in \mathbb{C}^2$.
\end{exm}

\section{Nearly $T_{B}^{-1}$ invariant subspaces in $\ma{D}_\alpha$ }

In this section we address the more difficult question of near invariance for the operator $T_B: \ma{D}_\alpha(\mathbb{D})\rightarrow \ma{D}_\alpha(\mathbb{D})$ with a degree-$m$ Blaschke product  $B$,  which is not isometric but simply
bounded below, extending the methods of Erard \cite{Er}.

It is known that the special multiplication operator $T_B$ is always bounded on Dirichlet-type space $\ma{D}_\alpha: =\ma{D}_\alpha(\mathbb{D})$ for any finite Blaschke product $B$. The study of multiplication invariant subspaces of Hardy spaces can be
traced  back to Lance and Stessin's work \cite{LS}; they described closed subspaces of the Hardy spaces $H^p(\mathbb{D})$ which are inner-invariant. In 2004, Erard investigated the nearly invariant subspaces related to lower bounded multiplication operator $M_u$ on a Hilbert space $\ma{H}$ in \cite{Er}. There are four conditions on the pairs $(\ma{H}, u)$ as below:\vspace{1mm}

$(i)$\; $\ma{H}$ is a Hilbert space and a linear submanifold of
\begin{center}
$\ma{O}$($\ma{W}$)$ :=\{f:\;\ma{W}\rightarrow \mathbb{C}\; |\; f\;\mbox{is analytic}\},$
\end{center}
where $\ma{W}$ is an open subset of $\mathbb{C}^d$ ($d\in \mathbb{N}$),

$(ii)$\;$u\in \ma{O}$($\ma{W}$) satisfies $uh\in \ma{H}$ for all $h\in \ma{ H}$,

$(iii)$\;for all $w \in \ma{W}$, the evaluation $\ma{H}\rightarrow  \mathbb{C}$, $h\rightarrow h(w)$ is continuous,

$(iv)$\;there exists $c > 0$ such that for all $h \in \ma{H}$,\;$c\|h\|_{\ma{H}}\leq \|uh\|_{\ma{H}}$. \vspace{1mm}

In the sequel, a ``subspace" means a closed linear subspace, and a ``linear manifold'' is an algebraic subspace that is not necessarily closed.

For the above $(\ma{H},u),$ the \emph{lower bound} of $M_u$ relative to the norm $\|\cdot\|_{\ma{H}}$ is defined by \begin{eqnarray}\gamma_{\ma{H},M_u}=\sup\{c>0:\; \forall\;h\in \ma{H},\;c\|h\|_{\ma{H}}\leq\|uh\|_{\ma{H}}\}\in ]0,\infty[.\label{gamma}\end{eqnarray}

Erard  gave the definition of ``nearly invariant under division by $u$", which is same as ``nearly $M_u^{-1}$ invariant", a special case of Definition \ref{defn T}. Considering the importance of the backward shift $M_z$, Erard proved the following theorem on nearly $S^*$ invariant subspaces in $\ma{H},$ under the assumption that $M_z: \ma{H}\rightarrow \ma{H}$ is bounded below.

\begin{thm} \cite[Theorem 5.1]{Er} Assume that $\ma{H}$ satisfies $(i)$-$(iv)$ with $u(z)=z$, and
$$dim (\ma{H}\ominus M_z \ma{H})=1\;\mbox{and}\; \|h\|_{\ma{H}}\leq \|M_z h\|_{\ma{H}}\;\mbox{for all}\;h\in \ma{H}.$$ Assume also that there exists $f\in \ma{H}$ with $f(0)\neq 0$. Let $\ma{M}$ be a nonzero subspace of $\ma{H}$ which is nearly invariant under the backward shift $M_z$. Let $g$ be any unit vector of $\ma{M}\ominus(\ma{M}\cap M_z \ma{H}).$ Then there exists a linear submanifold $\ma{N}$ of $H^2(\mathbb{D})$ such that $\ma{M}=g\ma{N}$ and for all $h\in \ma{M}$, we have $$\|h\|_{\ma{H}}\geq \left\|\frac{h}{g}\right\|_{H^2(\mathbb{D})}.$$ Besides, $\ma{N}$ is invariant under the backward shift and $g(0)\neq 0.$\end{thm}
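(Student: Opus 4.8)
The plan is to adapt the Hitt--Sarason ``dividing out'' argument to the present non-isometric setting, where one has only the one-sided estimate $\|h\|_{\ma{H}}\le\|M_z h\|_{\ma{H}}$ rather than an isometry. I would first record the structural facts forced by the hypotheses. Since point evaluation at $0$ is continuous by $(iii)$, there is a vector $k_0\in\ma{H}$ with $h(0)=\la h,k_0\ra$ for all $h$, and the existence of some $f\in\ma{H}$ with $f(0)\ne 0$ guarantees $k_0\ne 0$. As $M_z$ is bounded below its range $M_z\ma{H}$ is closed, and every $M_z v$ vanishes at $0$, so $M_z\ma{H}\subset k_0^\perp$; both being closed of codimension one (the former by hypothesis, the latter since $k_0\ne 0$), they coincide, i.e. $M_z\ma{H}=\{h\in\ma{H}:h(0)=0\}$. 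This immediately yields $g(0)\ne 0$: otherwise $g\in k_0^\perp=M_z\ma{H}$, whence $g\in\ma{M}\cap M_z\ma{H}$, contradicting $g\perp(\ma{M}\cap M_z\ma{H})$ together with $\|g\|=1$. The same reproducing-kernel argument shows $\dim[\ma{M}\ominus(\ma{M}\cap M_z\ma{H})]\le\dim(\ma{H}\ominus M_z\ma{H})=1$, and it is exactly $1$ because $\bigcap_n M_z^n\ma{H}=\{0\}$, so $g$ is well defined.

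Next I would run the iteration. Writing $\ma{M}=(\ma{M}\cap M_z\ma{H})\oplus\mathbb{C}g$ orthogonally, any $h\in\ma{M}$ splits as $h=c_0 g+z h^{(1)}$ with $c_0=\la h,g\ra$ and $z h^{(1)}\in\ma{M}\cap M_z\ma{H}$; near invariance then gives $h^{(1)}\in\ma{M}$. Iterating produces coefficients $c_k=\la h^{(k)},g\ra$ (with $h^{(0)}=h$) and vectors $h^{(k)}\in\ma{M}$ such that
$$h=g\sum_{k=0}^{n}c_k z^k+z^{\,n+1}h^{(n+1)}.$$
The decisive point is the norm bookkeeping: at each stage Pythagoras gives $\|h^{(k)}\|^2=|c_k|^2+\|z h^{(k+1)}\|^2$, and the lower bound $\|h^{(k+1)}\|\le\|z h^{(k+1)}\|$ turns this into $\|h^{(k)}\|^2\ge|c_k|^2+\|h^{(k+1)}\|^2$. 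Telescoping over $k$ yields $\sum_{k=0}^{\infty}|c_k|^2\le\|h\|_{\ma{H}}^2$, so $k(z):=\sum_k c_k z^k$ lies in $H^2(\mathbb{D})$ with $\|k\|_{H^2}\le\|h\|_{\ma{H}}$.

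It then remains to identify $h$ with $gk$ and to assemble $\ma{N}$. Because the remainders satisfy $\|h^{(n+1)}\|_{\ma{H}}\le\|h\|_{\ma{H}}$, continuity of point evaluation $(iii)$ bounds $|h^{(n+1)}(z)|$ by a constant depending only on $z$ times $\|h\|_{\ma{H}}$, so $|z^{\,n+1}h^{(n+1)}(z)|\to 0$ for each $z\in\mathbb{D}$; passing to the limit in the displayed identity gives the pointwise equality $h(z)=g(z)k(z)$. Setting $\ma{N}=\{h/g:h\in\ma{M}\}\subset H^2(\mathbb{D})$ then gives $\ma{M}=g\ma{N}$ together with the asserted inequality $\|h\|_{\ma{H}}\ge\|h/g\|_{H^2}$. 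Finally, for backward-shift invariance I would take $h=gk\in\ma{M}$, use the first splitting $h=c_0 g+z h^{(1)}$ with $h^{(1)}=g\tilde k\in\ma{M}$, and divide by the not-identically-zero function $g$ to obtain $k=c_0+z\tilde k$; evaluating at $0$ identifies $c_0=k(0)$ and $\tilde k=(k-k(0))/z=S^*k$, so $S^*k\in\ma{N}$, proving $\ma{N}$ is invariant under the backward shift.

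I expect the main obstacle to be precisely the interplay between the one-sided lower bound and the limiting step: one must extract a genuine, norm-controlled $H^2$ function $k$ from a process that is only contractive rather than isometric, and then confirm that this ``formal quotient'' really reconstructs $h$ as $gk$ pointwise. The reproducing-kernel decay $|z|^{n+1}\to 0$ furnished by $(iii)$ is what closes this gap; everything else reduces to the orthogonal-projection bookkeeping and the elementary observation that $M_z\ma{H}$ is exactly the zero set of the evaluation functional at the origin.
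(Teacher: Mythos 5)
This statement is quoted from Erard \cite{Er} (his Theorem 5.1) and the paper itself gives no proof of it, so the only meaningful comparison is with Erard's argument, which is also the machinery the paper reuses in Section 3: your iteration $h^{(k)}=c_k g+z h^{(k+1)}$ is exactly the decomposition of Lemma \ref{lem decomp} specialized to $T=M_z$ (with $Q$ the projection onto $\mathbb{C}g$ and $R$ the divide-by-$z$ operator $R=(M_z^*M_z)^{-1}M_z^*P^{\ma{M}\cap M_z\ma{H}}$), and your $\ell^2$ bookkeeping plus reproducing-kernel decay is precisely how the factorizations quoted as Lemmas \ref{lem alpha1} and \ref{thm lem} are obtained from Erard's Theorem 3.2. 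Your proof is correct and takes essentially this same route, with two minor caveats: the claim $\bigcap_n M_z^n\ma{H}=\{0\}$ is asserted rather than proved (it follows from the same decay estimate $|h(w)|\leq C_w|w|^n\|h\|_{\ma{H}}$ for $w\in\ma{W}\cap\mathbb{D}$ that you use in the limiting step), and the identity $h=gk$ is established only on $\ma{W}\cap\mathbb{D}$, which is indeed all that the statement requires, since $\ma{N}\subset H^2(\mathbb{D})$.
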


We note the operator $T_B: \ma{D}_\alpha\rightarrow \ma{D}_\alpha$ is more general than $M_z: H^2(\mathbb{D})\rightarrow H^2(\mathbb{D})$, so we seek characterizations for nearly $T_{B}^{-1}$ invariant subspaces in $\ma{D}_\alpha$ with $\alpha\in[-1, 1]$ and a degree-$m$ Blaschke product $B$. The following lemma is the factorization of elements in a nearly $T^{-1}$ invariant subspace with a bounded below $T\in \ma{B}(\ma{H})$. (We use the notation $P^\ma{N}$ for the orthogonal projection onto a subspace $\ma{N}$.)

\begin{lem} \cite[Lemma 2.1]{Er}\label{lem decomp} Let $\ma{H}$ be a Hilbert space, $T: \ma{H}\rightarrow \ma{H}$ be a bounded operator such that for all $h\in \ma{H},$ $\|h\|_{\ma{H}}\leq \|Th\|_{\ma{H}}$ and $\ma{M}$ be a nearly $T^{-1}$ invariant subspace of $\ma{H}$. Set
\begin{eqnarray}R=(T^* T)^{-1} T^* P^{\ma{M}\cap T\ma{H}}\;\mbox{and}\;\; Q=P^{\ma{M}\ominus(\ma{M}\cap T\ma{H})}.\label{RQ}\end{eqnarray} Then for all $h\in \ma{M}$ and $p\in \mathbb{N}_0$, we have
\begin{eqnarray} h=\sum_{k=0}^p T^k QR^kh +T^{p+1} R^{p+1}h.\label{hfor}\end{eqnarray}
\end{lem}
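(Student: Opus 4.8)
The plan is to construct a canonical left inverse for $T$, show that the operator $R$ implements ``division by $T$'' on the component of a vector lying in $\ma{M}\cap T\ma{H}$, and then iterate. First I would check that $R$ is well defined. The hypothesis $\|h\|_{\ma{H}}\le\|Th\|_{\ma{H}}$ gives $\la (T^*T-I)h,h\ra\ge 0$, so $T^*T\ge I$ is invertible with bounded inverse; moreover $T$ has closed range, so $T\ma{H}$, and hence $\ma{M}\cap T\ma{H}$, is a closed subspace. Writing $L:=(T^*T)^{-1}T^*$, one has $LT=I$ and $TL=T(T^*T)^{-1}T^*=P^{T\ma{H}}$, the orthogonal projection onto the range of $T$. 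Thus $R=LP^{\ma{M}\cap T\ma{H}}$.

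The crucial step is the base case $p=0$. Fix $h\in\ma{M}$ and split it orthogonally inside $\ma{M}$ as $h=P^{\ma{M}\cap T\ma{H}}h+Qh$. Since $\ma{M}\cap T\ma{H}\subset T\ma{H}$, the projection $P^{T\ma{H}}$ fixes $P^{\ma{M}\cap T\ma{H}}h$, whence
$$TRh=TLP^{\ma{M}\cap T\ma{H}}h=P^{T\ma{H}}P^{\ma{M}\cap T\ma{H}}h=P^{\ma{M}\cap T\ma{H}}h.$$
This yields $h=Qh+TRh$, which is \eqref{hfor} for $p=0$. Because $Rh\in\ma{H}$ and $T(Rh)=P^{\ma{M}\cap T\ma{H}}h\in\ma{M}$, near $T^{-1}$ invariance of $\ma{M}$ forces $Rh\in\ma{M}$; hence $R$ maps $\ma{M}$ into itself, and inductively $R^k h\in\ma{M}$ for every $k\ge0$.

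I would then finish by induction on $p$. The identity $h=Qh+TRh$, applied with $h$ replaced by $R^{p+1}h\in\ma{M}$, reads $R^{p+1}h=QR^{p+1}h+TR^{p+2}h$; substituting this into the remainder term $T^{p+1}R^{p+1}h$ of the level-$p$ formula converts $\sum_{k=0}^p T^kQR^kh+T^{p+1}R^{p+1}h$ into $\sum_{k=0}^{p+1}T^kQR^kh+T^{p+2}R^{p+2}h$, which is the level-$(p+1)$ formula. The main obstacle is really the base case: one must recognise that $R$ is precisely the left inverse $L$ precomposed with the projection onto $\ma{M}\cap T\ma{H}$, so that $TR$ recovers that projection, and that near invariance keeps every iterate $R^k h$ inside $\ma{M}$. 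Once these two points are in place the induction is purely formal.
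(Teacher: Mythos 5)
Your proof is correct and follows essentially the same route as the source this paper cites for the lemma (Erard's Lemma 2.1, which the paper quotes without reproducing the argument): one identifies $(T^*T)^{-1}T^*$ as a left inverse of $T$ so that $TRh=P^{\ma{M}\cap T\ma{H}}h$, invokes near $T^{-1}$ invariance to conclude $R\ma{M}\subset\ma{M}$, and iterates the resulting identity $h=Qh+TRh$. All the supporting details you supply (invertibility of $T^*T\ge I$, closedness of $T\ma{H}$ and of $\ma{M}\cap T\ma{H}$, and $T(T^*T)^{-1}T^*=P^{T\ma{H}}$) are accurate, so there is nothing to correct.
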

In this section, the range of the symbol $l$ in \eqref{l} is also true for $T=T_B$, a nonzero nearly $T_B^{-1}$ invariant subspace $\ma{M}$ of $\ma{H}=\ma{D}_\alpha$ with $\alpha\in[-1, 1]$ and a degree-$m$ Blaschke product $B$. In the sequel, we will endow the space $\ma{D}_\alpha$ with two different equivalent norms according to the cases $\alpha\in [0,1]$ and $\alpha\in [-1,0)$, so we divide the discussion into two subsections.

\subsection{Nearly $T_{B}^{-1}$ invariant subspaces in $\ma{D}_\alpha$ with $\alpha\in [0,1]$}

 In this subsection, $\ma{D}_\alpha$ is endowed with an equivalent norm as in \eqref{normB} denoted by $\| \cdot\|_1$, that is,
  \begin{eqnarray}
  \|f\|_{1}^2:=\sum_{k=0}^\infty(k+1)^\alpha \|h_k\|_{H^2}^2  \label{eq:d01norm}
  \end{eqnarray}
   for any  $f=\sum_{k=0}^\infty B^k h_k$ with $h_k\in K_{B}$.
Then it holds that
$$\|T_{B}f\|_1^2=\|Bf\|_{1}^2=\sum_{k=0}^\infty(k+2)^\alpha \|h_k\|_{H^2}^2\geq \|f\|_{1}^2,$$
which implies the operator $T_{B}: \ma{D}_\alpha \rightarrow \ma{D}_\alpha$ is lower bounded. This
confirms that $(\ma{D}_\alpha, T_{B})$ satisfies   \emph{Conditions} $(i)$-$(iv)$ and the lower bound of $T_{B}$ relative the norm $\|\cdot\|_{1}$  is $\gamma_1:=1.$ And  it is also true that
$B^{-1}(D(0,1))=B^{-1}(\mathbb{D})=\mathbb{D}.$ The above facts together with  $$\bigcap_{n\in \mathbb{N}} B^n \ma{D}_\alpha=\{0\}\;\;\mbox{on}\;\mathbb{D}$$  imply the following lemma, which can be deduced from \cite[Theorem 3.2]{Er} with $\ma{H}=\ma{D}_\alpha,\;u=B$, $\gamma=\gamma_1:=1$ and the index set $I=\{1, 2,\cdots, l\}.$

\begin{lem}\label{lem alpha1}For $\alpha\in[0, 1]$, let $\ma{M}$ be a nonzero nearly $T_{B}^{-1}$ invariant subspace of $\ma{D}_\alpha$ endowed with the norm  $\|\cdot\|_1$ in \eqref{eq:d01norm} and $(g_i)_{i\in\{1,\cdots,l\}}$ be a hilbertian basis of $\ma{M}\ominus (\ma{M}\cap T_{B}\ma{D}_\alpha)$.  Then for all $h\in \ma{M},$ there exists $(q_i)_{i\in\{1,\cdots, l\}}$ in $\ma{O}(\mathbb{D})$ such that
\begin{eqnarray}h=\sum_{i=1}^l g_i q_i\;\mbox{on}\;\mathbb{D},\label{halpha1} \end{eqnarray}
and for all $i\in \{1,\cdots, l\},$ there exists a sequence $\{c_{ki}\}_{k\in \mathbb{N}}\in \mathbb{C}^{\mathbb{N}}$ with
\begin{eqnarray}&&q_i=\sum_{k=0}^\infty c_{ki}  B ^k,\label{qi1}\\ &&\sum_{i=1}^l\sum_{k=0}^\infty |c_{ki}|^2\leq \|h\|_{1}^2.\label{cki1} \end{eqnarray}\end{lem}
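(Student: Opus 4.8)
The plan is to apply the abstract decomposition from Lemma \ref{lem decomp} with $T=T_B$ and then translate the resulting series into the Blaschke-power representation guaranteed by the Dirichlet-type Wold decomposition. First I would verify that the hypotheses of Lemma \ref{lem decomp} hold: we have already observed that $\|f\|_1 \leq \|T_B f\|_1$ for all $f \in \ma{D}_\alpha$ (the lower bound being $\gamma_1=1$), so the contraction-type inequality $\|h\|_{\ma H} \leq \|Th\|_{\ma H}$ is satisfied. Applying \eqref{hfor} to a fixed $h \in \ma{M}$ gives, for every $p \in \mathbb{N}_0$,
\begin{eqnarray*}
h = \sum_{k=0}^p T_B^k Q R^k h + T_B^{p+1} R^{p+1} h,
\end{eqnarray*}
where $Q$ projects onto $\ma{M}\ominus(\ma{M}\cap T_B\ma{D}_\alpha)$ and $R=(T_B^* T_B)^{-1} T_B^* P^{\ma{M}\cap T_B\ma{D}_\alpha}$. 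Since $Q R^k h$ lies in the $l$-dimensional space spanned by the hilbertian basis $(g_i)_{i\in\{1,\cdots,l\}}$, I can write $Q R^k h = \sum_{i=1}^l c_{ki}\, g_i$ for scalars $c_{ki}$, so that the finite-sum part becomes $\sum_{i=1}^l g_i \sum_{k=0}^p c_{ki} B^k$ once we recall $T_B^k = $ multiplication by $B^k$.

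The next step is to control the tail and pass to the limit $p\to\infty$. Because $T_B$ is bounded below with lower bound $1$, the operator $R$ is a contraction (this is exactly the mechanism in Erard's \cite[Theorem 3.2]{Er}), so $\|T_B^{p+1} R^{p+1} h\| = \|B^{p+1} R^{p+1} h\|$ stays bounded and, more importantly, the Blaschke tail $B^{p+1} R^{p+1} h$ converges to $0$ pointwise on $\mathbb{D}$ because $|B(z)|<1$ there and $\bigcap_n B^n \ma{D}_\alpha = \{0\}$. This forces the pointwise identity $h = \sum_{i=1}^l g_i q_i$ on $\mathbb{D}$ with $q_i := \sum_{k=0}^\infty c_{ki} B^k$, giving \eqref{halpha1} and \eqref{qi1}; here the $q_i$ are manifestly analytic on $\mathbb{D}$, i.e. elements of $\ma{O}(\mathbb{D})$.

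For the norm bound \eqref{cki1}, I would use the orthogonality built into the decomposition: the summands $T_B^k Q R^k h$ are mutually orthogonal in the $\|\cdot\|_1$ norm (this orthogonality is precisely why the equivalent norm \eqref{eq:d01norm} was introduced, since $T_B^k Q R^k h$ contributes to distinct $B$-power levels in the Wold-type expansion), and $\{g_i\}$ is orthonormal. Applying Bessel/Parseval across the levels $k$ and the index $i$ yields
\begin{eqnarray*}
\sum_{i=1}^l \sum_{k=0}^\infty |c_{ki}|^2 = \sum_{k=0}^\infty \|Q R^k h\|_1^2 \leq \|h\|_1^2,
\end{eqnarray*}
the last inequality coming from the fact that the partial sums $\sum_{k=0}^p \|T_B^k Q R^k h\|_1^2 = \sum_{k=0}^p \|Q R^k h\|_1^2$ (isometry of $T_B$ on these components under $\|\cdot\|_1$) are bounded by $\|h\|_1^2$ via \eqref{hfor}. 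I expect the main obstacle to be making the orthogonality and the $\|\cdot\|_1$-isometry of the $B$-power blocks precise: one must check carefully that multiplication by $B^k$ shifts the Wold-grading in \eqref{eq:d01norm} cleanly so that the Pythagorean identity applies level-by-level, and that the tail genuinely vanishes rather than merely staying bounded. Everything else is a direct instantiation of \cite[Theorem 3.2]{Er} with the parameters $\ma{H}=\ma{D}_\alpha$, $u=B$, $\gamma=\gamma_1=1$, and $I=\{1,\dots,l\}$.
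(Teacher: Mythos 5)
Your overall skeleton --- applying Lemma \ref{lem decomp} with $T=T_B$, extracting $QR^kh=\sum_i c_{ki}g_i$, and killing the tail $T_B^{p+1}R^{p+1}h$ pointwise --- is indeed the internal mechanism behind Erard's Theorem 3.2, which is all the paper itself invokes (the paper gives no independent proof: it verifies Conditions $(i)$--$(iv)$, the lower bound $\gamma_1=1$, $B^{-1}(\mathbb{D})=\mathbb{D}$ and $\bigcap_n B^n\ma{D}_\alpha=\{0\}$, and then cites \cite[Theorem 3.2]{Er}). However, your justification of the key estimate \eqref{cki1} contains a genuine flaw, and it is exactly at the point you flagged as ``the main obstacle'': the blocks $T_B^kQR^kh$ are \emph{not} mutually orthogonal in $\|\cdot\|_1$, and $T_B$ is \emph{not} a $\|\cdot\|_1$-isometry. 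The vectors $QR^kh$ (and in particular the $g_i$) lie in $\ma{M}\ominus(\ma{M}\cap T_B\ma{D}_\alpha)$, which is in no way contained in $K_B$; their Wold expansions generically have nonzero components at every level, so multiplication by $B^k$ does not place them at ``level $k$'' of the grading in \eqref{eq:d01norm}. Moreover, for $\alpha\in(0,1]$ one has $\|T_Bf\|_1^2=\sum_k(k+2)^\alpha\|h_k\|_{H^2}^2>\|f\|_1^2$ for $f\neq 0$, so $T_B$ is only expansive, never isometric, in this norm. Both pillars of your Bessel/Parseval step therefore fail; this cannot be repaired by ``checking carefully.''

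The correct mechanism needs only one-step orthogonality together with expansivity, iterated via near invariance. For $h\in\ma{M}$ write $h=Qh+P^{\ma{M}\cap T_B\ma{D}_\alpha}h$, an orthogonal decomposition inside $\ma{M}$; since $T_B(T_B^*T_B)^{-1}T_B^*$ is the orthogonal projection onto the closed range $T_B\ma{D}_\alpha$, the operators in \eqref{RQ} satisfy $T_BRh=P^{\ma{M}\cap T_B\ma{D}_\alpha}h$, whence
\begin{equation*}
\|h\|_1^2=\|Qh\|_1^2+\|T_BRh\|_1^2\geq\|Qh\|_1^2+\|Rh\|_1^2 .
\end{equation*}
Near invariance gives $Rh\in\ma{M}$ (because $T_BRh\in\ma{M}$), so iterating yields $\sum_{k=0}^p\|QR^kh\|_1^2+\|R^{p+1}h\|_1^2\leq\|h\|_1^2$ for every $p$, and letting $p\to\infty$ gives $\sum_{k=0}^\infty\|QR^kh\|_1^2\leq\|h\|_1^2$; Parseval is then applied only inside the $l$-dimensional space spanned by the orthonormal $(g_i)$, giving $\sum_i|c_{ki}|^2=\|QR^kh\|_1^2$ and hence \eqref{cki1}. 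Two smaller points: logically \eqref{cki1} should be established \emph{before} \eqref{qi1}, since the pointwise convergence of $\sum_k c_{ki}B(z)^k$ on $\mathbb{D}$ requires knowing the coefficients are (square-)summable while $|B(z)|<1$; and in the tail step the condition $\bigcap_n B^n\ma{D}_\alpha=\{0\}$ is a red herring --- the tail vanishes pointwise because point evaluations on $\ma{D}_\alpha$ are bounded, $\|R^{p+1}h\|_1\leq\|h\|_1$ by the recursion above, and $|B(z)|^{p+1}\to0$.
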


Now we are ready to state a theorem on nearly $T_{B}^{-1}$ invariant subspaces in $\ma{D}_\alpha$ with $\alpha\in [0,1].$

 \begin{thm} \label{thm Dalpha} For $\alpha\in[0, 1]$, let $\ma{M}$ be a nonzero nearly $T_{B}^{-1}$ invariant subspace of $\ma{D}_\alpha$ endowed with the norm $\|\cdot\|_1$ in \eqref{eq:d01norm} and $G_0:=[g_1, g_2,\cdots, g_l]^t$ be a matrix containing an orthonormal basis $(g_i)_{i\in\{1,\cdots,l\}}$ of $\ma{M}\ominus (\ma{M}\cap T_{B}\ma{D}_\alpha)$.  Then there exists a linear submanifold $\ma{N}\subset H^2(\mathbb{D}, \mathbb{C}^l)$  such that $\ma{M}=\ma{N}G_0$ and for all $h\in \ma{M}$, there exists $q\in \ma{N}$ such that $h= q G_0$ and $$\|h\|_{1}\geq \|q\|_{H^2(\mathbb{D}, \mathbb{C}^l)}.$$
 Moreover, $\ma{N}$ is invariant under $T_{\ol{B}}$.\end{thm}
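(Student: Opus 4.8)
The plan is to build the submanifold $\ma{N}$ directly out of the coefficient data supplied by Lemma \ref{lem alpha1}, and then verify the two claimed properties (the norm inequality and the $T_{\ol{B}}$-invariance) separately. First I would take an arbitrary $h\in\ma{M}$ and apply Lemma \ref{lem alpha1} to obtain the factorization $h=\sum_{i=1}^l g_i q_i = qG_0$ where $q=[q_1,\dots,q_l]$ and each $q_i=\sum_{k=0}^\infty c_{ki}B^k$. The key structural observation is that, although the $q_i$ are a priori only analytic functions on $\mathbb{D}$ expressed as power series in $B$, the square-summability bound \eqref{cki1} lets me associate to $q_i$ a genuine $H^2(\mathbb{D})$ function by the \emph{substitution} $B^k\mapsto z^k$; that is, I define $\widetilde{q}_i(z):=\sum_{k=0}^\infty c_{ki}z^k$, which lies in $H^2(\mathbb{D})$ precisely because $\sum_k|c_{ki}|^2<\infty$. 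Setting $\ma{N}:=\{\,[\widetilde{q}_1,\dots,\widetilde{q}_l]:\ h\in\ma{M}\,\}\subset H^2(\mathbb{D},\mathbb{C}^l)$, the linear-manifold structure is inherited from the linearity of $h\mapsto(c_{ki})$.

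Having fixed this definition, the norm inequality is essentially immediate: by construction $\|q\|_{H^2(\mathbb{D},\mathbb{C}^l)}^2=\sum_{i=1}^l\sum_{k=0}^\infty|c_{ki}|^2$, and \eqref{cki1} says exactly that this quantity is at most $\|h\|_1^2$, giving $\|h\|_1\geq\|q\|_{H^2(\mathbb{D},\mathbb{C}^l)}$. The identity $\ma{M}=\ma{N}G_0$ then follows from combining \eqref{halpha1} with the definition of $\ma{N}$: every $h\in\ma{M}$ is of the form $qG_0$ with $q\in\ma{N}$, and conversely each generator of $\ma{N}$ came from such an $h$, so the two sides agree. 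One should be slightly careful to confirm that the factorization is consistent as an identity of analytic functions on $\mathbb{D}$ and not merely a formal manipulation, but this is guaranteed by the convergence in $\ma{D}_\alpha$-norm already built into Lemma \ref{lem alpha1}.

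For the $T_{\ol{B}}$-invariance of $\ma{N}$, the plan is to trace how the backward-shift action $T_B^{-1}$ on $\ma{M}$ transforms the coefficient sequences. Near $T_B^{-1}$ invariance means that dividing out a factor of $B$ (when possible) keeps us in $\ma{M}$; under the correspondence $B^k\leftrightarrow z^k$ this division corresponds to the ordinary backward shift $T_{\ol{z}}$ on the sequence $(c_{ki})_k$, i.e.\ deleting the constant term and reindexing. Concretely, I would show that if $q=[\widetilde q_1,\dots,\widetilde q_l]\in\ma{N}$, then $T_{\ol{B}}q:=[T_{\ol{z}}\widetilde q_1,\dots,T_{\ol{z}}\widetilde q_l]$ again arises from some element of $\ma{M}$, by exhibiting the relevant element (obtained from $h$ by subtracting its $\ma{M}\ominus(\ma{M}\cap T_B\ma{D}_\alpha)$ component and applying the left inverse of $T_B$). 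This is where the telescoping factorization \eqref{hfor} of Lemma \ref{lem decomp}, with $R$ and $Q$ as in \eqref{RQ}, should do the real work: it expresses how stripping off the leading $K_B$-layer and shifting interacts with the orthonormal generating vectors $g_i$.

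I expect the main obstacle to be the $T_{\ol{B}}$-invariance step, and specifically the bookkeeping needed to prove that the backward-shifted coefficient array $(c_{k+1,i})_k$ is realized by an honest element of $\ma{M}$ rather than just a formal sequence. The delicate point is that near $T_B^{-1}$ invariance guarantees divisibility only under the hypothesis $T_Bg\in\ma{M}$, so I must argue that after removing the $g_i$-components the remainder genuinely lies in $\ma{M}\cap T_B\ma{D}_\alpha$ (so that the operator $R=(T_B^*T_B)^{-1}T_B^*P^{\ma{M}\cap T_B\ma{D}_\alpha}$ in \eqref{RQ} returns an element of $\ma{M}$), and then track that the coefficients of $Rh$ are exactly the shifted coefficients of $h$. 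Matching the two bookkeeping schemes — the abstract projection-and-inverse formalism of Lemma \ref{lem decomp} versus the explicit $B^k\mapsto z^k$ coefficient substitution — is the crux, and I would handle it by verifying the correspondence on the generating layer and propagating it through the series using the norm control from \eqref{cki1}.
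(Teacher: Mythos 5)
Your overall architecture (factorization and norm bound from Lemma \ref{lem alpha1}, invariance via Lemma \ref{lem decomp} with $p=0$ and the observation that near invariance forces $R(\ma{M})\subset\ma{M}$) matches the paper's, but the substitution $B^k\mapsto z^k$ is a genuine error that invalidates both conclusions. The identity $\ma{M}=\ma{N}G_0$ in the theorem is an identity of pointwise products of analytic functions on $\mathbb{D}$: the vector that multiplies $G_0$ to recover $h$ is $q=[q_1,\dots,q_l]$ with $q_i=\sum_{k}c_{ki}B^k$, not your substituted $\widetilde q_i=\sum_k c_{ki}z^k$. Unless $B(z)=z$ one has $\widetilde q\,G_0\neq h$, so with your definition of $\ma{N}$ the sentence ``every $h\in\ma{M}$ is of the form $qG_0$ with $q\in\ma{N}$'' is false; it silently conflates $q$ with $\widetilde q$. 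The same conflation undermines the last part: you \emph{define} $T_{\ol{B}}q:=[T_{\ol{z}}\widetilde q_1,\dots,T_{\ol{z}}\widetilde q_l]$, i.e.\ the backward shift on the coefficient sequences, but the theorem asserts invariance under the genuine Toeplitz operator $T_{\ol{B}}$ acting on $H^2(\mathbb{D},\mathbb{C}^l)$, which acts on your $\widetilde q_i$ in an entirely different way. Proving invariance for a relabelled operator is not proving the stated claim.

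The fix is to drop the substitution, which is also unnecessary: since $|B|=1$ a.e.\ on $\mathbb{T}$, the series $q_i=\sum_k c_{ki}B^k$ already converges in $H^2(\mathbb{D})$, and (as the paper computes, using that the powers $B^k$ form an orthonormal system --- strictly this requires $B(0)=0$, since $\la B^k,B^j\ra_{H^2}=B(0)^{k-j}$ for $k>j$; otherwise one only gets equivalence of norms via the bounded, bounded-below composition operator $f\mapsto f\circ B$) one has $\|q_i\|_{H^2}^2=\sum_k|c_{ki}|^2$, so the bound \eqref{cki1} gives $\|h\|_1\geq\|q\|_{H^2(\mathbb{D},\mathbb{C}^l)}$ for the honest $q$. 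With $\ma{N}:=\{q\in H^2(\mathbb{D},\mathbb{C}^l):\exists h\in\ma{M},\ h=qG_0\}$ the identity $\ma{M}=\ma{N}G_0$ holds by construction. Your coefficient-shift intuition is then correct for the actual Toeplitz operator: $T_{\ol{B}}(B^k)=P(\ol{B}B^k)=B^{k-1}$ for $k\geq 1$ (and $T_{\ol{B}}C_0=\ol{B(0)}C_0$, which vanishes when $B(0)=0$), so $T_{\ol{B}}q=\sum_{k\geq1}B^{k-1}C_k$, and the mechanism you sketch --- $h=Qh+T_BRh$ from \eqref{hfor}, $Q(qG_0)=C_0G_0$, hence $R(qG_0)=(T_{\ol{B}}q)G_0$, with $R(qG_0)\in\ma{M}$ because $T_BRh=P^{\ma{M}\cap T_B\ma{D}_\alpha}h\in\ma{M}$ and $\ma{M}$ is nearly $T_B^{-1}$ invariant --- is exactly the paper's proof of invariance. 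In short: keep $q$, discard $\widetilde q$, and your outline becomes the paper's argument.
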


\begin{proof} For $\alpha\in[0, 1]$, the equation \eqref{halpha1} implies that every $h\in \ma{M}$ has the form
\begin{eqnarray}h=\sum_{i=1}^l g_i q_i= q G_0 \;\mbox{on}\;\mathbb{D},\label{hgq1}\end{eqnarray}with $q=[q_1, q_2,\cdots, q_l].$ Combining the series of $q_i$ in \eqref{qi1}, we deduce
\begin{eqnarray*}\|q_i\|_{H^2(\mathbb{D})}^2=\sum_{k=0}^\infty|c_{ki}|^2\;\mbox{for all}\;i\in \{1,\cdots,l\}.\end{eqnarray*} And then the norm estimation in \eqref{cki1} implies
\begin{eqnarray} \|q\|_{H^2(\mathbb{D}, \mathbb{C}^l)}^2=\sum_{i=1}^l \|q_i\|_{H^2(\mathbb{D})}^2=\sum_{i=1}^l\sum_{k=0}^\infty |c_{ki}|^2\leq \|h\|_{1}^2,\label{norm1}\end{eqnarray}
with
$$q=[q_1, q_2,\cdots, q_l]=\sum_{k=0}^\infty B^k C_k\in H^2(\mathbb{D}, \mathbb{C}^l)$$
where $C_k=(c_{k1},\cdots, c_{kl})\in \mathbb{C}^l$.
Then there exists a linear submanifold $$\ma{N}:=\{ q\in H^2(\mathbb{D},\mathbb{C}^l):\; \exists h\in \ma{M},\;h= q G_0\},$$ satisfying  $\ma{M}= \ma{N}G_0$. For all $h\in \ma{M},$ \eqref{norm1} implies
 $$\|h\|_{1}\geq \|q\|_{H^2(\mathbb{D}, \mathbb{C}^l)}.$$

 Next we show $\ma{N}$ is invariant under $T_{\ol{B}}$. Let
$T=T_{B}$ and $\ma{H}=\ma{D}_\alpha$ with $\alpha\in[0,1]$ in Lemma \ref{lem decomp}, and then the operators in \eqref{RQ} become  $$R=(T_{B}^* T_{B})^{-1} T_{B}^{*} P^{\ma{M}\cap T_{B} \ma{D}_\alpha}\;\;\mbox{and}\;\;Q=P^{\ma{M}\ominus(\ma{M}\cap T_{B} \ma{D}_\alpha)}.$$
Hence the equation \eqref{hfor} with $p=0$ gives  $h=Qh+T_{B}Rh$, which together with \eqref{hgq1} entail
\begin{eqnarray*}q G_0 =Q(q G_0)+T_{B} R(q G_0)= C_0 G_0 + B R(q G_0).\end{eqnarray*} And then we obtain
\begin{eqnarray*} BR(q G_0)=(q-C_0) G_0= \left( \sum_{k=1}^\infty B^k C_k\right)G_0,
\end{eqnarray*}
which implies
\begin{eqnarray*}R(q G_0)&=&\left(\sum_{k=1}^\infty B^{k-1} C_k\right)G_0\nonumber\\&=&\left(T_{\ol{B}}\left(\sum_{k=0}^\infty B^{k}C_k\right)\right)G_0\\&=& (T_{\ol{B}} (q))G_0.\end{eqnarray*}
The formula $T_B Rh=P^{\ma{M}\cap T_B \ma{D}_\alpha} h \in \ma{M}$ together with
the fact that
$\ma{M}$ is a nearly $T_B^{-1}$ invariant subspace imply $R(\ma{M})\subset\ma{M}$. In particular, $R(q G_0)\in \ma{M}$  and then $T_{\ol{B}} (q)\in \ma{N}$ from the definition of $\ma{N}.$ This means $\ma{N}$ is a $T_{\ol{B}}$ invariant submanifold of $H^2(\mathbb{D},\mathbb{C}^l)$.
\end{proof}

 Now consider the following special case of \eqref{commute}:

 \begin{eqnarray}\begin{CD}
H^2(\mathbb{D},\mathbb{C}^l) @>T_B>> H^2(\mathbb{D},\mathbb{C}^l)\\
@VV U V @VV U V\\
H^2(\mathbb{D}, \mathbb{C}^{ml}) @>S>> H^2(\mathbb{D}, \mathbb{C}^{ml}).
\end{CD}\label{eq:commuteB}\end{eqnarray}

Here $SU=UT_{B}$ holds for the unilateral shift $S: H^2(\mathbb{D},\mathbb{C}^{ml})\rightarrow H^2(\mathbb{D},\mathbb{C}^{ml})$ and $T_B: H^2(\mathbb{D},\mathbb{C}^l)\rightarrow H^2(\mathbb{D},\mathbb{C}^l)$ with multiplicity $ml$. This leads to the following remark for finite-dimensional nearly $T_{B}^{-1}$ invariant subspaces in $\ma{D}_\alpha$ with $\alpha\in [0,1].$

\begin{rem}\label{remN} In Theorem \ref{thm Dalpha}, if $\ma{M}$ is finite-dimensional, then $\ma{N}\subset H^2(\mathbb{D},\mathbb{C}^l)$ is also finite-dimensional and hence closed.
From the Beurling-Lax Theorem and the commutative diagram \eqref{eq:commuteB}, we deduce that
\[
\ma{N}=U^*(H^2(\mathbb{D},\mathbb{C}^{ml})\ominus\Psi H^2(\mathbb{D},\mathbb{C}^{r}))
\]
 with   $0 \le r\leq ml$  and an inner function $\Psi\in H^\infty(\mathbb{D}, \ma{L}(\mathbb{C}^r, \mathbb{C}^{ml})).$
 Then $\ma{M}=[U^*(H^2(\mathbb{D},\mathbb{C}^{ml})\ominus  \Psi H^2(\mathbb{D},\mathbb{C}^{r}))]G_0.$
\end{rem}

\subsection{Nearly $T_{B}^{-1}$ invariant subspaces in $\ma{D}_\alpha$ with $\alpha\in [-1,0)$}

In this subsection, we cannot make $T_B$ into an expansive operator, but it is possible to achieve a good enough lower bound by taking an
equivalent norm.
So we endow $\ma{D}_\alpha$ with the modified equivalent norm denoted by $\|\cdot\|_2$ as follows:
for any  $f=\sum_{k=0}^\infty B^k h_k$ with $h_k\in K_{B}$,
\begin{eqnarray}\|f\|_{2}^2:=\sum_{k=0}^{N-1}N^\alpha\|h_k\|_{H^2}^2 +\sum_{k=N}^\infty(k+1)^\alpha \|h_k\|_{H^2}^2 ,
\label{eq:d-10}
\end{eqnarray}  where $N$ is a fixed and sufficiently large   positive integer, to be specified below. With respect to the norm $\|\cdot\|_2$,  the lower bound of $T_{B}$ defined in \eqref{gamma} is
\begin{eqnarray}\gamma_{2}:=\left(1-\frac{1}{N+1}\right)^{-\alpha/2}.
\label{gamma2}\end{eqnarray}
 Then it holds that $\|T_Bf\|_{2}^2=\|B f\|_{2}^2 \geq \gamma_2^2 \|f\|_{2}^2\;\mbox{for any}\; f\in \ma{D}_\alpha,$ implying
 that the operator $T:=\gamma_2^{-1} T_{B}: \ma{D}_\alpha\rightarrow  \ma{D}_\alpha$ satisfies
 \begin{eqnarray*}
 \|Tf\|_{2}=\|\gamma_2^{-1} T_{B}f\|_{2}\geq \|f\|_{2}\;\;\mbox{for any}\;f\in \mathcal{D}_\alpha.
 \end{eqnarray*}

So $(\ma{D}_\alpha, T_{B})$ also satisfies \emph{Conditions} $(i)$-$(iv)$ with the lower bound $\gamma_2$  given in \eqref{gamma2} for $\alpha\in[-1,0)$. Here we choose $N$ large enough such that $\gamma_{2}$ satisfies $B^{-1}(D(0,\gamma_2))\supset s \mathbb{D}$ with $s \mathbb{D}$ a disc   containing all the zeros of $B$. This ensures that
\begin{eqnarray}\|\gamma_2^{-1} B\|_{H^\infty(s\mathbb{D})}<1.\label{sgamma} \end{eqnarray}
 Furthermore, $T:=\gamma_2^{-1} T_{B}$ satisfies the assumptions in Lemma \ref{lem decomp} and
 \begin{eqnarray*}\bigcap_{n\in \mathbb{N}} \left(B^n \ma{D}_\alpha\right)|_{s \mathbb{D}}=\bigcap_{n\in \mathbb{N}} \left(T^n \ma{D}_\alpha\right)|_{s \mathbb{D}}=\{0\}.\end{eqnarray*}

Based on the above facts and \cite[Theorem 3.2]{Er}, a lemma similar to Lemma \ref{lem alpha1} holds for the case $\alpha\in [-1,0)$ with $\gamma_2$ in \eqref{gamma2}.

\begin{lem}\label{thm lem} For $\alpha\in[-1,0)$ and  $\gamma_2:=\left(1-\frac{1}{N+1}\right)^{-\alpha/2}$ with large enough $N$, let $\ma{M}$ be a nonzero nearly $T_{B}^{-1}$ invariant subspace of $\ma{D}_\alpha$ endowed with the norm $\|\cdot\|_2$ in \eqref{eq:d-10} and $(g_i)_{i\in\{1,\cdots,l\}}$ be a hilbertian basis of $\ma{M}\ominus (\ma{M}\cap T_{B}\ma{D}_\alpha)$. Then for all $h\in \ma{M},$ there exists $(q_i)_{i\in\{1,\cdots, l\}}$ in $\ma{O}(s\mathbb{D})$ such that \begin{eqnarray}h=\sum_{i=1}^l g_i q_i\;\mbox{on}\;s \mathbb{D},\label{halpha2} \end{eqnarray}
and for all $i\in \{1,\cdots, l\},$ there exists a sequence $\{d_{ki}\}_{k\in \mathbb{N}}\in \mathbb{C}^{\mathbb{N}}$ with
\begin{eqnarray}&&q_i=\sum_{k=0}^\infty d_{ki}\left( \gamma_2^{-1}B \right)^k\;\mbox{on}\;s \mathbb{D},\label{qi2}\\ &&\sum_{i=1}^l\sum_{k=0}^\infty |d_{ki}|^2\leq \|h\|_{2}^2.\label{cki2} \end{eqnarray}\end{lem}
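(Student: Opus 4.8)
The plan is to argue exactly as for Lemma~\ref{lem alpha1}, invoking \cite[Theorem 3.2]{Er} but now with the rescaled operator $T:=\gamma_2^{-1}T_B$ in place of $T_B$ and the sub-disc $s\mathbb{D}$ in place of $\mathbb{D}$. The hypotheses have already been arranged above the statement: $(\ma{D}_\alpha,B)$ satisfies Conditions $(i)$--$(iv)$, the lower bound of $T_B$ relative to $\|\cdot\|_2$ is exactly $\gamma_2$ from \eqref{gamma2}, and hence $T=\gamma_2^{-1}T_B$ is expansive, i.e. $\|Tf\|_2\ge\|f\|_2$ for all $f\in\ma{D}_\alpha$. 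Thus $T$ meets the assumptions of Lemma~\ref{lem decomp}, and $(\ma{D}_\alpha,T)$ falls under the scope of Erard's theorem with $\gamma=\gamma_2$ and index set $I=\{1,\dots,l\}$.

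First I would feed $T=\gamma_2^{-1}T_B$ into Lemma~\ref{lem decomp}, obtaining for each $h\in\ma{M}$ and each $p\in\mathbb{N}_0$ the identity
\begin{eqnarray*}
h=\sum_{k=0}^p T^k Q R^k h + T^{p+1}R^{p+1}h ,
\end{eqnarray*}
with $R,Q$ as in \eqref{RQ} for this $T$. Since $T^k=\gamma_2^{-k}T_{B^k}$ and $QR^kh$ lies in $\ma{M}\ominus(\ma{M}\cap T_B\ma{D}_\alpha)$, the $l$-dimensional space spanned by $g_1,\dots,g_l$, I would expand $QR^kh=\sum_{i=1}^l d_{ki}g_i$ in the orthonormal basis $(g_i)$; this defines the scalars $d_{ki}$. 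Then $T^kQR^kh=\sum_{i=1}^l d_{ki}(\gamma_2^{-1}B)^k g_i$, and summing over $k$ produces the required shape $h=\sum_{i=1}^l g_i q_i$ with $q_i=\sum_{k=0}^\infty d_{ki}(\gamma_2^{-1}B)^k$, which is \eqref{halpha2}--\eqref{qi2}.

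The coefficient bound \eqref{cki2} is the exact counterpart of \eqref{cki1} and is delivered by the same mechanism underlying Erard's theorem: the summands $T^kQR^kh$ in the decomposition are mutually orthogonal, so Pythagoras gives $\sum_{k=0}^p\|T^kQR^kh\|_2^2\le\|h\|_2^2$, and expansiveness $\|T^kv\|_2\ge\|v\|_2$ then forces $\sum_{k}\|QR^kh\|_2^2\le\|h\|_2^2$. As $\|QR^kh\|_2^2=\sum_i|d_{ki}|^2$, letting $p\to\infty$ yields $\sum_{i}\sum_k|d_{ki}|^2\le\|h\|_2^2$. For convergence I would restrict to $s\mathbb{D}$: the remainder $T^{p+1}R^{p+1}h=\gamma_2^{-(p+1)}B^{p+1}R^{p+1}h$ tends to $0$ on $s\mathbb{D}$ because $\bigcap_{n\in\mathbb{N}}(B^n\ma{D}_\alpha)|_{s\mathbb{D}}=\{0\}$, while \eqref{sgamma} gives $\|\gamma_2^{-1}B\|_{H^\infty(s\mathbb{D})}<1$, so the square-summability of $(d_{ki})_k$ makes the series for $q_i$ converge uniformly on $s\mathbb{D}$ and defines $q_i\in\ma{O}(s\mathbb{D})$.

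The main obstacle is precisely this localization to $s\mathbb{D}$. For $\alpha\in[-1,0)$ one cannot turn $T_B$ into an expansion on the whole disc, so --- unlike the case $\alpha\in[0,1]$ treated in Lemma~\ref{lem alpha1} --- the geometric control $\|\gamma_2^{-1}B\|_{H^\infty(s\mathbb{D})}<1$ holds only on the smaller disc $s\mathbb{D}$ that contains the zeros of $B$. Consequently the factorization \eqref{halpha2} and the expansion \eqref{qi2} must be read as identities of analytic functions on $s\mathbb{D}$ rather than on $\mathbb{D}$, and the real work is to confirm that the restriction to $s\mathbb{D}$, the intersection condition $\bigcap_{n\in\mathbb{N}}(B^n\ma{D}_\alpha)|_{s\mathbb{D}}=\{0\}$, and the hypotheses of \cite[Theorem 3.2]{Er} are mutually consistent --- which is exactly what the choice of a large $N$ and the estimate \eqref{sgamma} were engineered to secure.
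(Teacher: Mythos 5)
Your top-level strategy coincides with the paper's: the paper gives no self-contained proof of this lemma, but obtains it exactly as you propose, by invoking \cite[Theorem 3.2]{Er} for $(\ma{D}_\alpha,\|\cdot\|_2)$ and $u=B$ with lower bound $\gamma=\gamma_2$, the conclusion being localized to $s\mathbb{D}\subset B^{-1}(D(0,\gamma_2))$. Had you stopped at that citation, the proposal would simply match the paper. However, your reconstruction of the mechanism behind the key estimate \eqref{cki2} contains a genuine error: the summands $T^kQR^kh$ in \eqref{hfor} are \emph{not} mutually orthogonal, so the Pythagoras step $\sum_{k=0}^p\|T^kQR^kh\|_2^2\le\|h\|_2^2$ has no justification. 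Orthogonality of the family $T^k(\ma{M}\ominus(\ma{M}\cap T\ma{H}))$ would at the very least require $T$ to be an isometry, and even then it fails: for $j<k$ one has $\langle T^jQR^jh,\,T^kQR^kh\rangle=\langle QR^jh,\,T^{k-j}QR^kh\rangle$, and $T^{k-j}QR^kh$ lies in $T\ma{H}$ but need not lie in $\ma{M}$, whereas $QR^jh$ is only known to be orthogonal to $\ma{M}\cap T\ma{H}$. Here $T=\gamma_2^{-1}T_B$ is merely expansive, not isometric. Already in the classical Hitt situation the summands $c_k z^k g$ are not orthogonal in $H^2$; the $\ell^2$ bound on the coefficients comes from a different argument.

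The correct mechanism, which is what Erard's proof uses, is an iterated two-term orthogonal decomposition: for $v\in\ma{M}$ one has $v=Qv+TRv$, where $Qv=P^{\ma{M}\ominus(\ma{M}\cap T\ma{H})}v$ and $TRv=P^{\ma{M}\cap T\ma{H}}v$ are orthogonal, hence $\|v\|_2^2=\|Qv\|_2^2+\|TRv\|_2^2\ge\|Qv\|_2^2+\|Rv\|_2^2$ by expansiveness; near invariance gives $Rv\in\ma{M}$, so this can be iterated with $v=R^kh$, yielding $\|h\|_2^2\ge\sum_{k=0}^p\|QR^kh\|_2^2+\|R^{p+1}h\|_2^2$ and therefore \eqref{cki2} after letting $p\to\infty$. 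A second, smaller misstatement: the vanishing of the remainder $T^{p+1}R^{p+1}h$ on $s\mathbb{D}$ does not follow from $\bigcap_{n}(B^n\ma{D}_\alpha)|_{s\mathbb{D}}=\{0\}$ (the remainder changes with $p$, so that intersection statement says nothing about it); it follows from the bound $\|R^{p+1}h\|_2\le\|h\|_2$ just obtained, the continuity of point evaluations (condition $(iii)$), and \eqref{sgamma}, which together give $|(\gamma_2^{-1}B(z))^{p+1}(R^{p+1}h)(z)|\le C_z\|h\|_2\,\|\gamma_2^{-1}B\|_{H^\infty(s\mathbb{D})}^{p+1}\to 0$ for each $z\in s\mathbb{D}$, where $C_z$ is the norm of the evaluation functional at $z$. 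With these two repairs your outline becomes a correct self-contained proof along the paper's (i.e.\ Erard's) lines.
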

In order to use the submanifold in $H^2(\mathbb{D},\mathbb{C}^l)$ to describe nearly $T_{B}^{-1}$ invariant subspaces in $\ma{D}_\alpha$ with $\alpha\in[-1,0)$, here we introduce an unitary mapping $U_s: H^2(s\mathbb{D},\mathbb{C}^l)\rightarrow H^2(\mathbb{D},\mathbb{C}^l)$ by
\begin{eqnarray*} (U_sf)(z)=f(sz).\end{eqnarray*}
Then the diagram \eqref{commute2}  commutes, with $T_s^*:= U_s T_{B^{-1}} U_s^*$,
 \begin{eqnarray}\begin{CD}
H^2(s\mathbb{D},\mathbb{C}^l) @>T_{B^{-1}}>> H^2(s\mathbb{D},\mathbb{C}^l)\\
@VV U_s V @VV U_s V\\
H^2(\mathbb{D}, \mathbb{C}^l) @>T_s^*>> H^2(\mathbb{D}, \mathbb{C}^l).
\end{CD}\label{commute2}\end{eqnarray}
Since the disc $s\mathbb{D}$ contains all zeros of $B$, the symbol $B^{-1}$
lies in $L^\infty(s\mathbb{T}),$ and thus
 \begin{eqnarray}(T_s^*f)(z)&=&(U_s T_{B^{-1}} U_s^*f)(z)\nonumber\\&=&(U_s T_{B^{-1}})f(s^{-1}z)
 \nonumber\\&=&U_s  \left[P_{H^2(s\mathbb{D}, \mathbb{C}^l)}\left( \frac{1}{B(z)} f(s^{-1}z)\right)\right]\nonumber\\&=&P_{H^2(\mathbb{D}, \mathbb{C}^l)}\left( \frac{1}{B(sz)} f(z)\right)\nonumber\\&=&T_{\frac{1}{B(sz)}} f(z)
 =T_{\ol{B(s^{-1}z)}} f(z),\label{TsB*}\end{eqnarray} due to
 the fact $ B^{-1}(sz)=\ol{B(s^{-1}z)}$ for $z\in \mathbb{T}.$

Based on the above notations, we present a theorem for nearly $T_{B}^{-1}$ invariant subspaces in $\ma{D}_\alpha$ with $\alpha\in[-1,0)$ and $\gamma_2$ in \eqref{gamma2}.

 \begin{thm}\label{thm alpha2} For $\alpha\in[-1,0)$ and  $\gamma_2:=\left(1-\frac{1}{N+1}\right)^{-\alpha/2}$ with large enough $N$, let $\ma{M}$ be a nonzero nearly $T_{B}^{-1}$ invariant subspace of $\ma{D}_\alpha$ endowed with the norm $\|\cdot\|_2$ in \eqref{eq:d-10} and $G_0:=[g_1, g_2,\cdots, g_l]^t$ be a matrix containing an orthonormal basis $(g_i)_{i\in\{1,\cdots,l\}}$ of $\ma{M}\ominus (\ma{M}\cap T_{B}\ma{D}_\alpha)$. Then there exists a linear submanifold $\ma{N}\subset H^2(s\mathbb{D}, \mathbb{C}^l)$ such that $\ma{M}= \ma{N} G_0$ on $s\mathbb{D}$ and for all $h\in \ma{M}$ there exists $q\in \ma{N}$ such that $h=qG_0$ on $s\mathbb{D}$ with $$\|h\|_{2} \geq  \left( 1- \left\|\gamma_2^{-1}B\right\|_{H^\infty(s\mathbb{D})}^2\right)^{1/2} \|q\|_{H^2(s\mathbb{D},\mathbb{C}^l)}.$$
Moreover,  $\ma{N}$ is invariant under $T_{B^{-1}}$ and then $U_s(\ma{N})$ is invariant under $T_s^*:=U_sT_{B^{-1}} U_s^*$ in $H^2(\mathbb{D}, \mathbb{C}^l)$.\end{thm}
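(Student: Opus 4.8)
The plan is to follow the proof of Theorem~\ref{thm Dalpha} line by line, but now on the disc $s\mathbb{D}$ that contains all the zeros of $B$, and with the rescaled operator $T:=\gamma_2^{-1}T_B$, which is expansive for $\|\cdot\|_2$. First I would feed into the argument the factorization supplied by Lemma~\ref{thm lem}: each $h\in\ma{M}$ can be written as $h=\sum_{i=1}^l g_iq_i=qG_0$ on $s\mathbb{D}$, where $q=[q_1,\dots,q_l]$ and $q_i=\sum_{k\ge0}d_{ki}(\gamma_2^{-1}B)^k$ with $\sum_i\sum_k|d_{ki}|^2\le\|h\|_2^2$ by \eqref{cki2}. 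Defining $\ma{N}:=\{q\in H^2(s\mathbb{D},\mathbb{C}^l):\exists\,h\in\ma{M},\ h=qG_0 \text{ on } s\mathbb{D}\}$ then yields $\ma{M}=\ma{N}G_0$ on $s\mathbb{D}$, once I check that each such $q$ really belongs to $H^2(s\mathbb{D},\mathbb{C}^l)$ (from the norm bound below) and is uniquely determined by $h$ (from the linear independence of the $g_i$).

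The norm inequality is the routine part. Put $\rho:=\|\gamma_2^{-1}B\|_{H^\infty(s\mathbb{D})}$, so that $\rho<1$ by \eqref{sgamma}. Since $\|(\gamma_2^{-1}B)^k\|_{H^2(s\mathbb{D})}\le\rho^k$, the triangle inequality applied to the series for $q_i$ followed by the Cauchy--Schwarz inequality against the geometric series $\sum_k\rho^{2k}=(1-\rho^2)^{-1}$ gives $\|q_i\|_{H^2(s\mathbb{D})}^2\le(1-\rho^2)^{-1}\sum_k|d_{ki}|^2$. Summing over $i$ and using \eqref{cki2} produces $\|q\|_{H^2(s\mathbb{D},\mathbb{C}^l)}^2\le(1-\rho^2)^{-1}\|h\|_2^2$, which is exactly the claimed bound $\|h\|_2\ge(1-\rho^2)^{1/2}\|q\|_{H^2(s\mathbb{D},\mathbb{C}^l)}$.

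For the $T_{B^{-1}}$-invariance I would apply Lemma~\ref{lem decomp} to $T=\gamma_2^{-1}T_B$ with $p=0$, obtaining $h=Qh+T(Rh)$ with $Q,R$ as in \eqref{RQ}. The subspace $\ma{M}\ominus(\ma{M}\cap T_B\ma{D}_\alpha)$ is unaffected by the scalar rescaling, so, exactly as in Theorem~\ref{thm Dalpha}, $Qh$ is the constant-coefficient part $D_0G_0$ (writing $D_k:=(d_{k1},\dots,d_{kl})$, so that $D_0\in\ma{N}$), and solving $T(Rh)=h-Qh$ leaves the shifted series $Rh=\big(\sum_{j\ge0}(\gamma_2^{-1}B)^jD_{j+1}\big)G_0$ on $s\mathbb{D}$. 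I would then identify this shift with the Toeplitz action of $B^{-1}$: because $B^{-1}\in L^\infty(s\mathbb{T})$ and $T_{B^{-1}}(B^k)=B^{k-1}$ for $k\ge1$ on $H^2(s\mathbb{D})$ (while $T_{B^{-1}}1=\overline{B(0)}$), one computes $\sum_{j\ge0}(\gamma_2^{-1}B)^jD_{j+1}=\gamma_2\big(T_{B^{-1}}q-\overline{B(0)}D_0\big)$, where the correction $\gamma_2\overline{B(0)}D_0$ is a constant multiple of $D_0\in\ma{N}$. Finally, as in Theorem~\ref{thm Dalpha}, $T(Rh)=P^{\ma{M}\cap T_B\ma{D}_\alpha}h\in\ma{M}$, whence $Rh\in\ma{M}$ by near $T^{-1}$-invariance; reading off the $q$-value of $Rh$ gives $\gamma_2\big(T_{B^{-1}}q-\overline{B(0)}D_0\big)\in\ma{N}$, and hence $T_{B^{-1}}q\in\ma{N}$. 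The invariance of $U_s(\ma{N})$ under $T_s^*$ is then immediate from the commuting diagram \eqref{commute2}, since $T_s^*U_s=U_sT_{B^{-1}}$, and \eqref{TsB*} even exhibits $T_s^*$ as the Toeplitz operator $T_{\overline{B(s^{-1}z)}}$ on $H^2(\mathbb{D},\mathbb{C}^l)$.

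I expect the genuine obstacle to be this last identification. In contrast with the case $\alpha\in[0,1]$, where $T_{\overline{B}}$ acts on the full space $H^2(\mathbb{D})$, here $B^{-1}$ is not an $H^\infty(\mathbb{D})$ multiplier but only a bounded symbol on the boundary circle $s\mathbb{T}$; I therefore have to work with $T_{B^{-1}}$ on $H^2(s\mathbb{D})$, keep careful track of the rescaling $\gamma_2$, and control the exceptional $k=0$ term $\gamma_2\overline{B(0)}D_0$, verifying that it never leaves $\ma{N}$ so that the shifted series can be replaced by $\gamma_2T_{B^{-1}}q$ without affecting membership in $\ma{N}$.
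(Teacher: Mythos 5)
Your proposal follows the paper's own proof in all essentials: the same appeal to Lemma~\ref{thm lem} for the expansion $q_i=\sum_{k}d_{ki}\left(\gamma_2^{-1}B\right)^k$, the same Cauchy--Schwarz/geometric-series estimate against $\rho:=\left\|\gamma_2^{-1}B\right\|_{H^\infty(s\mathbb{D})}<1$ from \eqref{sgamma} to get $\|q\|_{H^2(s\mathbb{D},\mathbb{C}^l)}^2\leq (1-\rho^2)^{-1}\|h\|_2^2$, the same definition of $\ma{N}$, and the same use of Lemma~\ref{lem decomp} with $p=0$ and $T=\gamma_2^{-1}T_B$ for the invariance step, concluding via the diagram \eqref{commute2}.

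The one point where you diverge is the identification of the shifted series with the Toeplitz action, and there your version is the correct one: the paper asserts $\sum_{k\geq 1}\left(\gamma_2^{-1}B\right)^{k-1}D_k=\gamma_2\,T_{B^{-1}}(q)$, which holds only when $B(0)=0$. Indeed, since $1/B$ is analytic on $\{|w|>\max_n|a_n|\}$ including at $\infty$, where it takes the value $1/B(\infty)=\overline{B(0)}$, its Laurent expansion on a neighbourhood of $s\mathbb{T}$ has analytic part equal to the constant $\overline{B(0)}$; hence $T_{B^{-1}}D_0=\overline{B(0)}D_0$, and the correct identity is the one you wrote, $\sum_{k\geq1}\left(\gamma_2^{-1}B\right)^{k-1}D_k=\gamma_2\bigl(T_{B^{-1}}q-\overline{B(0)}D_0\bigr)$. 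Your repair --- observing that $D_0\in\ma{N}$ because $D_0G_0=Qh\in\ma{M}$ and $D_0$ is a constant function, so that $T_{B^{-1}}q\in\ma{N}$ follows by linearity of $\ma{N}$ --- is exactly what is needed for the conclusion that $\ma{N}$ is $T_{B^{-1}}$-invariant to hold for a general finite Blaschke product. (The analogous correction is implicitly needed in the proof of Theorem~\ref{thm Dalpha}, where $T_{\overline{B}}C_0=\overline{B(0)}C_0$ is likewise nonzero when $B(0)\neq0$, and is rescued in the same way since $C_0\in\ma{N}$ there.) In short: your proof is correct, has the same skeleton as the paper's, and your extra care with the $k=0$ term fixes a small but genuine flaw in the published argument.
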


\begin{proof} For $\alpha\in[-1, 0)$, the equation \eqref{halpha2} implies
\begin{eqnarray}h=\sum_{i=1}^l g_i q_i=q G_0 \;\mbox{on}\;s\mathbb{D}\label{hgq}\end{eqnarray} with $q=[q_1, q_2,\cdots, q_l].$ The display \eqref{sgamma} and $q_i$ in \eqref{qi2} entail
\begin{eqnarray*}\|q_i\|_{H^2(s\mathbb{D})}&=&\left\|\sum_{k=0}^\infty d_{ki}\left(\gamma_2^{-1}B\right)^k \right\|_{H^2(s\mathbb{D})} \\&\leq& \sum_{k=0}^\infty |d_{ki}|\left\|\gamma_2^{-1}B\right\|_{H^\infty(s\mathbb{D})}^k\\&\leq
&\left(\sum_{k=0}^\infty\left\|\gamma_2^{-1}B\right\|_
{H^\infty(s\mathbb{D})}^{2k}\right)
^{1/2} \left(\sum_{k=0}^\infty |d_{ki}|^2\right)^{1/2} \\&\leq & \left( 1-  \left\|\gamma_2^{-1}B\right\|_{H^\infty(s\mathbb{D})}^2\right)^{-1/2}
\left(\sum_{k=0}^\infty |d_{ki}|^2\right)^{1/2}, \end{eqnarray*} $\;\mbox{for all}\;i=1,\cdots,l$. Thus the above estimations and \eqref{cki2} imply
\begin{eqnarray} \|q\|_{H^2(s\mathbb{D}, \mathbb{C}^l)}^2&=&\sum_{i=1}^l \|q_i\|_{H^2(s\mathbb{D})}^2\nonumber\\ &\leq & \left( 1- \left\|\gamma_2^{-1}B\right\|_{H^\infty(s\mathbb{D})}^2\right)^{-1} \sum_{i=1}^l\sum_{k=0}^\infty |d_{ki}|^2\nonumber\\&\leq& \left( 1- \left\|\gamma_2^{-1}B\right\|_{H^\infty(s\mathbb{D})}^2\right)^{-1} \|h\|_{2}^2<+\infty.\label{qq}\end{eqnarray} This implies $$q=\sum_{k=0}^\infty \left(\gamma_2^{-1}B\right)^k D_k\in H^2(s\mathbb{D},\mathbb{C}^l),$$ where  $D_k=(d_{k1}, d_{k2},\cdots, d_{kl})\in \mathbb{C}^l$. Hence define a linear submanifold
\begin{eqnarray}\ma{N}:=\{q\in H^2(s\mathbb{D}, \mathbb{C}^l):\;\exists h\in \ma{M},\;\; h=qG_0 \;\mbox{on}\;s\mathbb{D}\},\label{n2}\end{eqnarray} satisfying
 $\ma{M}=\ma{N}G_0$ on $s\mathbb{D}$. For all $h\in \ma{M},$ \eqref{qq} gives
 $$\|h\|_{2} \geq  \left( 1- \left\|\gamma_2^{-1}B\right\|_{H^\infty(s\mathbb{D})}^2\right)^{1/2} \|q\|_{H^2(s\mathbb{D},\mathbb{C}^l)}.$$

 Next we show $\ma{N}$ is invariant under $T_{B^{-1}}$. Let $T=\gamma_2^{-1} T_{B}$ in the display \eqref{RQ}, and then the equation \eqref{hfor} with $p=0$ gives
 \begin{eqnarray*}h=Qh+ TR h=Q h+\gamma_2^{-1} T_{B}Rh. \end{eqnarray*}
 On $s\mathbb{D},$ the above equation together with \eqref{hgq} entail
 \begin{eqnarray*}q G_0=Q(q G_0)+\gamma^{-1}_2 T_{B} R(q G_0)=D_0 G_0+ \gamma_2^{-1} B R(q G_0),\end{eqnarray*} which further verifies
\begin{eqnarray*} \gamma_2^{-1}B R(q G_0)&=&(q-D_0) G_0
\\&=& \left( \sum_{k=1}^\infty \left(\gamma_2^{-1}B\right)^k D_k\right) G_0\;\mbox{on}\;s\mathbb{D}.\end{eqnarray*}
Letting $B^{-1}$ act on both sides, it yields that
\begin{eqnarray*}\gamma_2^{-1}R(q G_0)&=& \gamma_2^{-1} \left(\sum_{k=1}^\infty \left(\gamma_2^{-1}B\right)^{k-1}D_k \right)G_0\\&=&\gamma_2^{-1}\left(\gamma_2 T_{B^{-1}}(q)\right)G_0
\\& =& \left(T_{B^{-1}}(q)\right)G_0,
\end{eqnarray*}
which together with $\gamma_2^{-1}R(q G_0)\in \ma{M}$ entail $ T_{B^{-1}} (q)\in \ma{N}$ from the definition of $\ma{N}$ in \eqref{n2}. That means $\ma{N}$ is $T_{B^{-1}}$ invariant in $H^2(s\mathbb{D},\mathbb{C}^l).$  Finally,  the commutative diagram \eqref{commute2} ensures that $T_s^*(U_s(\ma{N}))\subset U_s(\ma{N}),$ ending the proof. \end{proof}

 In order to illustrate the operator $T_s^*$ given in  \eqref{TsB*}, we firstly take a degree-$1$ Blaschke product $B(z)=(a-z)(1-\ol{a}z)^{-1}$ with $a\in \mathbb{D},$  it is easy to obtain
\begin{eqnarray}  B(s^{-1}z)=\frac{a-s^{-1}z}{1-\ol{a}s^{-1}z}=\frac{1}{s}
\frac{as-z}{1-\ol{a}sz}\frac{1-\ol{a}sz}{1-\ol{a}s^{-1}z}.\label{BSZ}\end{eqnarray} Since the disc $s\mathbb{D}$ contains the zero of $B$, that means $|\ol{a}s^{-1}|<1,$ and then the last term in \eqref{BSZ} is an invertible analytic function on $\mathbb{D}.$ This is to say
that $B(s^{-1}z)$ can be written as a degree-$1$ Blaschke product times an invertible analytic function.

Generally, if $B$ is a degree-$m$ Blaschke product, it can be similarly calculated that
\begin{eqnarray*}B(s^{-1}z)=b(z) F_s(z)\label{BSF}\end{eqnarray*} with a degree-$m$ Blaschke product $b(z)$ and an invertible analytic function $F_s(z)$ on $\mathbb{D}$. So $T_s^*=T_{\ol{b F_s}}$ and then $T_s =T_{b F_s}$ on $H^2(\mathbb{D},\mathbb{C}^l).$

Suppose $\ma{F}\subset H^2(\mathbb{D},\mathbb{C}^l)$ is a $T_{b}$ invariant subspace, Theorem \ref{thm A} implies $\ma{F}=A H^2(\mathbb{D},\mathbb{C}^l)$ with some $T_{b}$-inner operator $A$. For the special case $l=1,$ it follows that $\ma{F}=\theta H^2(\mathbb{D})$ with an inner function $\theta$. The fact $F_s H^2(\mathbb{D})=H^2(\mathbb{D})$ entails $\theta H^2(\mathbb{D})$ is also a $T_{bF_s}$-invariant subspace in $H^2(\mathbb{D})$. So the model space $K_\theta$ is $T_s^*$ invariant.

In general, there is no simple description for $T^*_s$-invariant subspaces of $H^2(\DD)$, although in the finite-dimensional case, elementary linear algebra tells us that they are spanned by generalized eigenvectors of $T^*_s$, that is, elements of Toeplitz kernels. The paper \cite{CaP} is relevant here.





\subsection*{Acknowledgments.}
 This work was done while Yuxia Liang was visiting the University of Leeds. She is grateful to the School of Mathematics at the University of Leeds for its warm hospitality. Yuxia Liang is supported  by the National Natural Science Foundation of China (Grant No. 11701422).

\subsection*{Data availability.}
All data generated or analysed during this study are included in this published article.
 
\end{document}